\documentclass[11pt,a4paper]{amsart}

\usepackage{setspace}

\usepackage{parskip}  

\usepackage{lmodern}
\usepackage[T1]{fontenc}
\usepackage[utf8]{inputenc}
\usepackage[english]{babel}
\usepackage[activate={true,nocompatibility},final,tracking=true,kerning=true,spacing=true,factor=1100,stretch=10,shrink=10]{microtype}
\microtypecontext{spacing=nonfrench}    

\usepackage{amsmath,amssymb,amsfonts,amsthm}    
\usepackage{mathtools}                          
\usepackage{bm}                                 
\usepackage{thmtools}                           
\usepackage{thm-restate}                        
\usepackage{esint}                              
\usepackage{mathdots}                           
\usepackage{dsfont}                             
\usepackage{diffcoeff}                          
\usepackage{nicematrix}                         
\usepackage{tikz-cd}                            
\usepackage{pgfplots}                           

\usepackage[marginparwidth=2.5cm]{geometry}     

\usepackage{enumitem}                           
\usepackage{xcolor}                             
\usepackage{comment}                            
\usepackage{graphicx}                           
\usepackage{nicefrac}                           

\pgfplotsset{compat=1.18}

\usepackage[colorlinks = true]{hyperref}        
\usepackage[nameinlink,capitalise,noabbrev]{cleveref}                           
\usepackage[textsize=scriptsize]{todonotes}       
\usepackage[backend=biber,giveninits=true,style=numeric,sorting=nyt,maxcitenames=3,date=long, doi = false, isbn = false, url = false, eprint = false]{biblatex}
\usepackage{csquotes}       

\def\NT@numberwithin{section}      

\usepackage{etoolbox}

\makeatletter
\renewenvironment{proof}[1][\proofname]{%
  \par\pushQED{\qed}%
  \normalfont
  \topsep6\p@\@plus6\p@\relax
  \trivlist
  \item[\hskip\labelsep\textsc{#1}\@addpunct{.}]%
}{%
  \popQED\endtrivlist\@endpefalse
}
\makeatother

\renewcommand{\proofname}{Proof}

\declaretheoremstyle[
  spaceabove=6pt, spacebelow=6pt,
  headfont=\bfseries,
  bodyfont=\itshape,
  headpunct=.,
  postheadspace=0.5em
]{thmplain}

\declaretheoremstyle[
  spaceabove=6pt, spacebelow=6pt,
  headfont=\bfseries,
  bodyfont=\normalfont,
  headpunct=.,
  postheadspace=0.5em
]{thmdef}

\declaretheoremstyle[
  spaceabove=6pt, spacebelow=6pt,
  headfont=\itshape,
  bodyfont=\normalfont,
  headpunct=.,
  postheadspace=0.5em
]{thmremark}

\declaretheorem[name=Theorem,style=thmplain,numberwithin=\NT@numberwithin]{theorem}

\declaretheorem[name=Proposition,style=thmplain,sibling=theorem]{proposition}

\declaretheorem[name=Theorem,style=thmplain,numbered=no]{theorem*}
\declaretheorem[name=Lemma,style=thmplain,numbered=no]{lemma*}
\declaretheorem[name=Proposition,style=thmplain,numbered=no]{proposition*}
\declaretheorem[name=Corollary,style=thmplain,numbered=no]{corollary*}
\declaretheorem[name=Definition,style=thmdef,numbered=no]{definition*}
\declaretheorem[name=Remark,style=thmremark,numbered=no]{remark*}
\declaretheorem[name=Example,style=thmremark,numbered=no]{example*}

\crefname{theorem}{theorem}{theorems}
\Crefname{theorem}{Theorem}{Theorems}
\crefname{lemma}{lemma}{lemmas}
\Crefname{lemma}{Lemma}{Lemmas}
\crefname{proposition}{proposition}{propositions}
\Crefname{proposition}{Proposition}{Propositions}
\crefname{corollary}{corollary}{corollaries}
\Crefname{corollary}{Corollary}{Corollaries}
\crefname{definition}{definition}{definitions}
\Crefname{definition}{Definition}{Definitions}
\crefname{remark}{remark}{remarks}
\Crefname{remark}{Remark}{Remarks}
\crefname{example}{example}{examples}
\Crefname{example}{Example}{Examples}

\newtheorem*{claim*}{Claim} 

    \newcommand{\1}[0]{\mathds{1}}

    \newcommand{\Z}[0]{\mathbb{Z}}

    \newcommand{\E}[0]{\mathbb{E}}
    \renewcommand{\P}[0]{\mathbb{P}}

    \def\d{\delta}
    \newcommand{\e}[0]{\varepsilon}

    \def\Var{{\text{Var}}}

    \DeclarePairedDelimiterX\AP[2]{(}{)}{#1 \,\bmod\, #2}
    \DeclareMathOperator*{\argmin}{arg\,min}

\def\ts{{\nu}^{(c)}}
\def\tn{{\nu}^{(d)}}

\title{On low-discrepancy sequences and Poissonian pair correlation}
\author{Hannah Porath}
\date{}

\begin{document}

\address{Institute of Analysis and Number Theory, TU Graz, Austria}
\email{hannah.porath@tugraz.at}
\begin{abstract}
Uniform distribution modulo 1 is a classical notion of pseudo-randomness for sequences in the unit interval, which is quantified in terms of the discrepancy. Sequences whose discrepancy is of the smallest possible asymptotic order are called low-discrepancy sequences. The Poissonian pair correlation is another notion of pseudo-randomness, which studies the distribution of the gaps between pairs of elements of the sequence on a local scale. It is known that Poissonian pair correlation implies uniform distribution mod 1, and that the opposite implication is not true in general. It has also been observed that classical examples of low-discrepancy sequences fail to have Poissonian pair correlation, and it has been speculated that the two properties might be irreconcilable due to the high degree of structural rigidity that is required for low-discrepancy behavior. As we prove in this paper, this is not the case: we construct an example of a low-discrepancy sequence with Poissonian pair correlation. 
\end{abstract}

\maketitle

\section{Introduction}

A classical problem in mathematics is to characterize the ``pseudo-random'' nature of sequences. For real numbers in the unit interval, a classical notion of pseudo-randomness is uniform distribution modulo 1 (also called equidistribution). The paper from 1916 of Hermann Weyl \cite{weyl} is widely attributed with establishing the theory of uniform distribution modulo 1 (u.d.\ mod 1) as a full-fledged mathematical discipline. A sequence $(x_n)_{n \geq 1}$ in $[0,1)$ is said to be uniformly distributed modulo 1 if 
\begin{equation} \label{weyl_crit}
\lim_{N \to \infty} \frac{1}{N} \sum_{n=1}^N \1_{[a,b)}(x_n) = b-a
\end{equation}
for all intervals $[a,b) \subset [0,1)$. 
To formulate a quantitative version of \eqref{weyl_crit}, we need the notion of discrepancy of a sequence: For a sequence $(x_n)_{n \geq 1} \subset [0,1),$ the discrepancy (more precisely: the discrepancy of its finite truncation to the first $N$ initial points) is defined as
\begin{equation*}
    D_N(x_1,\dotsc, x_N) = \sup_{0 \leq a < b < 1} \left|\frac{1}{N} \sum_{1 \leq n \leq N} \1_{[a,b)}(x_n) - (b-a)\right|.
\end{equation*}
Similarly, the star-discrepancy of this sequence is defined as
$$
D_N^*(x_1, \dots, x_N) = \sup_{a \in [0,1)} \left| \frac{1}{N} \sum_{n=1}^N \1_{[0,a)}(x_n) -  a \right|.
$$
Those notions are closely related via
\begin{equation*}
    D_N^* \leq D_N \leq 2 D_N^*.
\end{equation*}

It is easily seen that the star-discrepancy of a finite point set $\{x_1, \dots, x_N\}$ cannot be smaller than $\frac{1}{2N}$, which is realized for the equally-spaced set $\{\frac{2k-1}{2N},\, 1 \leq k \leq N \}$. A fundamental fact, which was first established by van Aardenne-Ehrenfest \cite{vAE}, is that such a small asymptotic order of the star-discrepancy cannot be achieved for all subsets $\{x_1, \dots, x_N\}$ of an infinite sequence $(x_n)_{n \geq 1}$. The final quantitative form of this fact was established by Schmidt \cite{schmidt}, who proved that there is $c_\textup{abs}>0$ such that for every sequence $(x_n)_{n \geq 1}$,
$$
D_N^* (x_1, \dots, x_N) \geq c_{\textup{abs}} \frac{\log N}{N} \qquad \text{for infinitely many $N$.}
$$
This is optimal, since there are examples of infinite sequences whose star-discrepancy is of asymptotic order $O \left( \frac{\log N}{N} \right)$. Sequences which achieve this star-discrepancy bound are called low-discrepancy sequences. One classical example of such a sequence is the sequence of fractional parts $(\{n \alpha\})_{n \geq 1}$ (Kronecker sequence) for an $\alpha$ which is badly approximable in the sense of Diophantine approximation (see for example \cite[p.\ 125]{kn}). Another classical example of a low-discrepancy sequence is the van der Corput sequence, which is the starting point of the proof of the main theorem of the present paper and will be introduced in detail in the next section. There are further constructions of low-discrepancy sequences, based for example on an ergodic theory perspective \cite{carbone,nino} or on greedy-type algorithms \cite{kritz, paus, stein}. We have motivated the notion of uniform distribution modulo 1 from the desire to characterize pseudo-random behavior of real sequences. 
We note that uniform distribution modulo 1 is a pseudo-randomness property in the sense that almost all realizations of a sequence of independent, identically distributed (i.i.d.) random variables having uniform distribution on $[0,1)$ are uniformly distributed modulo 1; however, in the same sense being low-discrepancy is \emph{not} a pseudo-randomness property, since a typical realization of a random sequence will have a star-discrepancy  of order roughly $N^{-1/2}$ rather than $\frac{\log N}{N}$.
 
In recent years, ``local'' test statistics for pseudo-random behavior have been a major focus of mathematical research. These statistics are inspired by problems in theoretical physics, such as the Berry-Tabor conjecture (see \cite{mark} for an exposition). From the perspective of theoretical physics, a key purpose of these local statistics is to detect ``rigidity'' or ``clustering'' phenomena in the energy spectrum of quantum systems. One of the most important local test statistics, which is the one in the focus of the present paper, is the pair correlation. For a sequence $(x_n)_{n \geq 1}$ in $[0,1)$, set
\begin{equation*}
    R_{s,N}(x_1, \dots, x_N) = \frac{1}{N}\#\left\{1 \leq n \neq m \leq N \ | \ \|x_n-x_m\| \leq \frac{s}{N}\right\}.
\end{equation*}
Here $\| \cdot \|$ denotes the distance to the nearest integer. The sequence is said to exhibit Poissonian pair correlation if 
$$
R_{s,N} (x_1, \dots, x_N) \to 2s \qquad \text{as $N \to \infty$,}
$$
for every fixed $s>0$. This is a pseudo-randomness property: a realization of an i.i.d.\ sequence of random points (uniformly distributed in $[0,1)$) has Poissonian pair correlation almost surely. Establishing Poissonian pair correlation for a sequence of arithmetic origin is typically very difficult, but a few examples are known \cite{ebmv,hauke2026badroughrotationpoissonian,lst,lt}. It is known that Poissonian pair correlation implies uniform distribution modulo 1 \cite{alp,gl,Hauke_Zafeiropoulos_2023,markpair}. The converse is not true; for example, the Kronecker sequence $(\{n \alpha\})_{n \geq 1}$ is u.d.\ mod 1 for irrational $\alpha$, but does not have Poissonian pair correlation for any $\alpha$. It is remarkable that those sequences which have been identified to possess Poissonian pair correlation usually seem to have ``large'' star-discrepancy (of order no less than $N^{-1/2}$). On a heuristic level, this might just be an consequence of the fact that Poissonian pair correlation is a generic property of a random sequence, while low discrepancy is not, so that among sequences of small discrepancy it is ``unlikely'' to find a specimen with Poissonian pair correlation, and vice versa. Several constructions of low-discrepancy sequences have been tested for Poissonian pair correlation, and so far the outcome has always been negative. Much of the work in this direction has been initiated by Larcher and Stockinger \cite{ls1,ls2,ls3}. As already noted, the Kronecker sequence $(\{n \alpha\})_{n \geq 1}$ has low discrepancy for badly approximable $\alpha$, but does not have Poissonian pair correlation for any $\alpha$. This particular rigidity in the gap structure of the Kronecker sequence is most famously examplified in the Three Gap Theorem of S{\'o}s \cite{sos} and \'{S}wierczkowski \cite{swi}. In \cite{ls2}, Larcher and Stockinger placed this example in a general framework, and showed that for any sequence with the ``finite gap property'', the pair correlation cannot be Poissonian (see also \cite{allm,fw}). Overall, it is tempting to get the impression that there is a fundamental dichotomy at work here: low-discrepancy sequences seem to require a strong rigid structure in order to achieve the low-discrepancy property -- but such a rigid behavior is irreconcilable with pseudo-random behavior on a local level, i.e. Poissonian pair correlation. Questions about the relation of low-discrepancy behavior and Poissonian pair correlation have been addressed in the recent literature in several instances, see for example \cite{becher,hklsu,sw,ws}. Manuel Hauke-Treuer asked at the MCQMC conference in Linz 2022 the specific question whether a low-discrepancy sequence can have Poissonian pair correlation. The purpose of the present paper is to give an affirmative answer to this question.

\begin{theorem}\label{th_1}
There exist sequences $(x_n)_{n \geq 1}$ in $[0,1)$ whose discrepancy is of order 
$$
D_N^*(x_1, \dots, x_N) = O \left( \frac{\log N}{N} \right), 
$$
and which have Poissonian pair correlation.
\end{theorem}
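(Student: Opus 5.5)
We will build the sequence from the van der Corput sequence $(v_n)_{n\ge1}$ in base $2$ by small random perturbations, and then show that some realization has both properties. For $2^k \le n < 2^{k+1}$ the first $2^k$ van der Corput points form the grid $\{j2^{-k}\}$, and each new dyadic block refines it. The idea is to keep this dyadic skeleton, which is responsible for low discrepancy, while destroying the rigid local gap structure. We replace $v_n$ by $x_n = v_n + \xi_n/2^{L(n)}$, where $L(n)$ is the dyadic level of $v_n$ (roughly $\log_2 n$) and the $\xi_n$ are independent and uniform in $[0,1)$. Equivalently, each new point is placed uniformly at random inside the dyadic interval of length $\asymp 1/n$ that the van der Corput sequence would assign it. We may need a more careful choice of cell: say a uniform point in the elementary dyadic interval of length $2^{-m}$ with $m=\lfloor\log_2 n\rfloor - C$, chosen so that every elementary interval of that length receives exactly the right number of points in each complete block. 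This is a "jittered" or stratified van der Corput construction.

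\textbf{Step 1 (discrepancy).} Write $N=\sum_i 2^{k_i}$ in binary. Then $\{x_1,\dots,x_N\}$ splits into $O(\log N)$ groups. Each group has exactly one point in every dyadic interval of length $2^{-k_i}$ (up to the shift by $C$ levels), and this holds deterministically, whatever the jitter. For an interval $[0,a)$, each group contributes a counting error of at most $O(1)$: only the one or $O(2^C)$ boundary cells are miscounted. Summing over the groups gives $N D_N^* = O(\log N)$ for every realization. This step is routine, and it is why the jitter must respect the dyadic stratification.

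\textbf{Step 2 (expected pair correlation).} Fix $s$ and $N$, and compute $\E R_{s,N}$. Take $n\ne m$ whose random cells have lengths $\ell_n,\ell_m \asymp 1/N$, up to dyadic factors. The probability that $\|x_n-x_m\|\le s/N$ is the overlap measure of the two cells thickened by $s/N$, divided by $\ell_n\ell_m$. Summing over pairs amounts to integrating a piecewise-constant density of points against the kernel $\1_{[-s/N,s/N]}$. Within one stratified group the density of points is exactly uniform. Across groups, the cells have different lengths, and the sum of independent uniform positions is smoothed at scale $1/N$. The difficulty is that stratification creates \emph{negative} correlation at scale $\lesssim \ell$: two points from the same level never share a cell. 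So the main term is not exactly $2s$ unless the number of points within distance $s/N$ of a given point that fall in its own cell is negligible. To make it negligible, I will let the number of points per stratification cell grow. I will take the cell length $2^{-m}$ with $m = \log_2 n - g(n)$, where $g(n)\to\infty$ slowly; each cell then holds $2^{g(n)}$ points of the block, placed independently and uniformly in it. The discrepancy bound of Step 1 degrades to $O(2^{g}\log N / N)$ if done naively. So instead I will keep exact stratification at the coarse level and use within-cell randomness only for ordering and positions. One can check that a $2^g$-point cell then contributes error at most $O(1)$ only if its points are themselves stratified. This leads to the final construction: at level $k$, the block of new points is the van der Corput refinement, jittered by a random rotation or by random digit scrambling of the digits below level $k-g(k)$ (Owen scrambling, which preserves the $(0,1)$-sequence property and hence low discrepancy deterministically). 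With Owen scrambling, pairs whose common dyadic prefix has length $j$ have explicitly computable gap distributions. $\E R_{s,N}$ then becomes a sum over $j$ of a counting term times a gain coefficient. I expect this sum to tend to $2s$ because the gain coefficients equal $1$ at all scales coarser than the unscrambled depth, which goes to $-\infty$ relative to $\log N$.

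\textbf{Step 3 (concentration and the main obstacle).} Show $\Var R_{s,N} = O(N^{-\delta})$, using independence of the scrambling permutations at disjoint nodes; pairs of pairs are dependent only when they share tree ancestry. Chebyshev's inequality along $N=\lfloor(1+\e)^j\rfloor$ plus Borel–Cantelli then gives almost-sure convergence along that subsequence. Monotonicity of $R_{s,N}$ in $s$ and $N$ fills in the gaps in the standard way, as in the proofs of metric pair correlation results. Hence some (indeed almost every) realization is both low-discrepancy and Poissonian. The hardest step is Step 2: verifying that the expected pair count equals $2s+o(1)$ despite the stratification. This requires choosing the unscrambled depth so that the deterministic dyadic structure (needed for $O(\log N/N)$) is invisible at scale $s/N$. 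If full Owen scrambling turns out to give the limit $2s$ only up to a correction factor, I will first try interleaving several independent scrambles, or adding a final uniform random shift at scale $N^{-1}$ within each finest cell.
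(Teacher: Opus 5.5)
\textbf{There is a genuine gap, and it sits in Step 2, the step you flagged.} The construction you settle on cannot have Poissonian pair correlation, whatever the unscrambled depth.

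Owen scrambling preserves the $(0,1)$-sequence property. So for $N=2^K$ the first $N$ points have exactly one point in each interval $[a2^{-K},(a+1)2^{-K})$. Their digits beyond position $K$ are scrambled by permutations indexed by distinct prefixes, so the points are independent and uniform inside their cells. For a fixed point, the expected number of other points within $s/N$ is then
\[
\sum_{d\neq0}\mathbb{P}\bigl(|d+U'-U|\le s\bigr)=2s-\mathbb{P}\bigl(|U-U'|\le s\bigr)=\begin{cases}s^2,& s\le 1,\\ 2s-1,& s\ge 1,\end{cases}
\]
with $U,U'$ independent and uniform on $[0,1]$. Hence $\mathbb{E}R_{s,2^K}\to s^2$ (resp.\ $2s-1$), not $2s$. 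The missing same-cell term is exactly the negative correlation you noticed. It is of order one because the stratification acts at scale exactly $1/N$.

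Scrambling each block $B_k$ separately does not help. Each block is still one point per cell of length $2^{-k}$, and the last few blocks carry a positive proportion of the points. They give a deficit $N^{-1}\sum_{k<K}2^{k}\,\mathbb{P}(|U-U'|\le s2^{k}/N)\asymp_s 1$.

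The randomisation is in fact irrelevant. Write the points as $(a+u_a)/N$ with $a\in\mathbb{Z}/N\mathbb{Z}$, and take $s=S+\theta$ with $S\ge1$ and $0<\theta<1$. Then
\[
NR_{s,N}=2(S-1)N+2\#\{a:u_{a+S}-u_a\le\theta\}+2\#\{a:u_{a+S+1}-u_a\le\theta-1\}.
\]
If $R_{s,N}\to2s$ for all $\theta$, then $\#\{a:u_{a+S+1}-u_a\le\theta-1\}\ge(\theta-o(1))N$ for all $\theta$. This forces the average of $u_{a+S+1}-u_a$ to be at most $-\tfrac12+o(1)$, yet that average telescopes to $0$. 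So no $(0,1)$-sequence in base $2$ has Poissonian pair correlation, in any realization.

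Your fallbacks stay inside this obstruction:
\begin{itemize}
\item a uniform shift within the finest cells is what Owen scrambling already does;
\item a random rotation of a block leaves its gaps unchanged;
\item interleaving $r$ independent scrambles reduces the deficit only to $\asymp s/r$, while your deterministic discrepancy bound degrades to $O(r\log N/N)$.
\end{itemize}

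\textbf{What is missing} is a way to move points by $\omega(1/n)$ without paying for it in discrepancy. You correctly saw that such displacements are necessary; this is the paper's condition $ng(n)\to\infty$. Your intermediate variant with $2^{g}$ independent points per cell achieves the displacement but, as you note, costs $\asymp 2^{g/2}$ per block.

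The paper gives up per-block stratification altogether and controls the discrepancy globally by ``recycling''. It perturbs $x_n$ by $y_n\sim\mathrm{Unif}[-g(n),g(n)]$ with $g(n)=\log\log i_n/2^{i_n}$. It then moves the result to the nearest free odd multiple of $2^{-j_n}$, where $j_n=i_n+\lfloor(1-\varepsilon)\log i_n\rfloor$. That grid point is a van der Corput point with a larger index, and when that index arrives the vacated original point is put there instead.

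Hence $\{\nu_n\}_{n\le N}$ is a rearrangement of $\{x_n\}_{n\le N}$ except for indices in the last $\asymp\log\log N$ dyadic blocks. Each such index is displaced by $\ll i^{1-\varepsilon}\log\log i/2^{i}$, which costs only $O((\log N)^{1-\varepsilon}\log\log N\log\log\log N/N)$ in discrepancy. The paper then shows that the few recycled indices and the discretisation do not change the pair correlation. This reduces everything to the continuous perturbation $x_n+y_n$, where a first and second moment computation with Borel--Cantelli, essentially your Step 3, applies.
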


Our existence proof employs a randomized variant of the van der Corput sequence. We will present the construction of our example in the next section.

\section{Construction}

The binary van der Corput sequence is one of the most classical examples of a low-discrepancy sequence, and it is constructed as follows: Writing the positive integer $n$ in its binary representation $n = \sum_{i \geq 0} b_i2^i,$ we set
$$x_n = \sum_{i \geq 0}b_i2^{-i-1},$$
which yields the sequence $(x_n)_{n \geq 1} = \big(\frac{1}{2},\frac{1}{4},\frac{3}{4}, \frac{1}{8}, \cdots\big).$ The discrepancy of this sequence is known to satisfy
$$
D_N^*(x_1, \dots, x_N) \leq \frac{\log (N+1)}{N \log 2},
$$
see \cite[p. 127]{kn}. It is also known that this sequence does not have Poissonian pair correlation; see \cite{weiss} for a detailed calculation. When aiming to create a low-discrepancy sequence with Poissonian pair correlation, our approach is to randomly perturb a deterministic low-discrepancy sequence (such as the van der Corput sequence) with random fluctuations on a local level, i.e. using random $y_n \sim \text{Unif}([-g(n),g(n)])$ and then setting 
$$\ts_n = x_n + y_n \mod 1, \qquad n \geq 1.$$
We will show in Proposition \ref{ppccont} below that such a randomly perturbed sequence does indeed exhibit Poissonian pair correlation for a typical realization, if the perturbation range $g(n)$ is suitably chosen as a function of $n$ (not too small, so that the random perturbation does indeed lead to Poissonian pair correlation -- essentially, the suitable condition for this is that $g(n) = \omega(1/n)$, i.e., $n g(n) \to \infty$ as $n \to \infty$). However, it turns out to be rather difficult to preserve the low-discrepancy property. This is a bit surprising, since the low-discrepancy requirement of discrepancy of order $\frac{\log N}{N}$ seems to leave ample room for the $\omega(1/n)$-size random perturbations. However, the crucial fact is that a point $x_n$ has to be perturbed with a distance $g(n)$ that depends on its individual index $n$, while the discrepancy is calculated with respect to the index $N$ of the whole initial point set of points with indices $\{1,\dots, N\}$. In other words, a ``small'' perturbation of a point with a small index $n$ can be very large relative to the scaling variable $N$ which is later used to calculate the discrepancy of a point set with indices in $\{1, \dots, N\}$. This implies that, while it is not so difficult to randomize a triangular array $\{x_1, \dots, x_N\}$ (for any given $N$), in such a way that the randomized version essentially has Poissonian pair correlation, it is very delicate to apply the same procedure to an infinite \emph{sequence} of points, since any (small) changes made early in the constructed sequence will permanently remain as part of the sequence, and will appear to ``blow up'' relative to the precision level required for checking low-discrepancy behavior as the total number of points advances. 

To remedy this we instead consider a discretized random perturbation of the van der Corput sequence, and show in Proposition \ref{samepc} that this yields the same asymptotic pair correlation statistic as the sequence $(\ts_n)_{n \geq 1},$ which arises from a continuous random perturbation. Namely, we randomly move a van der Corput point $x_n = \frac{2k+1}{2^i}$ to a randomized position on a finer dyadic scale. The fact that this randomly perturbed point can also be represented as a dyadic fraction means that it is sooner or later bound to appear as an element of the (unperturbed) van der Corput sequence, and we can identify the perturbed point (with the smaller index) with the van der Corput point with the larger index. We then move this van der Corput point our sequence point is identified with to the original place. In this way we can ``recycle'' the perturbed points from earlier stages in the construction as points that are supposed to show up in the van der Corput sequence at a later stage. This construction solves the problem of the perceived ``blow-up'' of the size of the perturbation of our points, which we discussed above: Rather than keeping a perturbed point in perpetuity as a source of disturbance for the star-discrepancy calculation, we ``heal'' the disruption caused by the perturbed point by incorporating it into the construction of the van der Corput sequence (where all dyadic fractions, including the ones obtained in our perturbation procedure, have to show up sooner or later). This is the rough outline of the construction, but it is clear that the actual construction will require to choose all the relevant parameters in a carefully balanced way.

\textsc{Notation.} Throughout, let $0 < \e < \frac{1}{2}$ be fixed. We denote by $B_i$ the dyadic block $[2^{i},2^{i+1}) \cap \Z.$  Further, let $i_n = \lfloor \log_2 n\rfloor$ be such that $n \in B_{i_n}.$ Let $\|\cdot\|$ denote the distance to the nearest integer. \\
The symbols $O(\cdot)$, $o(\cdot)$, $\ll$, $\gg$, and $\asymp$ are used with their usual meanings: We write $f = O(g)$ or  $f \ll g$ if there exists a constant $C>0$ such that $f \le Cg$, where the implied constant may depend on fixed parameters specified in the surrounding context but is otherwise absolute. The notation $f \asymp g$ means both $f \ll g$ and $f \gg g$. Finally, $f = o(g)$ means that $f/g \to 0$.

\medskip

Define the parameters
\begin{equation}\label{par}
    g(n) = \frac{\log\log i_n}{2^{i_n}}, \text{ and } j_n = i_n + \lfloor (1-\e)\log i_n\rfloor.
\end{equation}
Notice that $g(\cdot)$ is constant on each dyadic block $B_i.$ Write $\d_n = \lfloor (1-\e)\log i_n\rfloor,$ respectively $\d = \d(i) = \lfloor(1-\e)\log i\rfloor.$\\
We define the candidate set
\begin{equation*}
    \mathcal{C}_n = \left\{ \frac{c_n}{2^{j_n}} \ \Big| \ c_n \in [1,\dotsc, 2^{j_n}-1] \text{ odd}, \ \left|\frac{c_n}{2^{j_n}}-x_n\right| < g(2^{i_n}) \right\}.
\end{equation*}
This set contains van der Corput points with indices in $B_{j_n}$ in a $g(2^{i_n})$-neighborhood of the point $x_n$ we want to perturb. 

\begin{tikzpicture}[
    point/.style={circle, fill, inner sep=1.2pt},
    arr/.style={->, thick}
]
\begin{scope}
    \draw[thick](0,0)--(15,0);
    \node[point][label = {$x_n$}] (xn) at (7.5,0) {};
    \draw[<->] (7.5 -3.00,0.75) -- (7.5 + 3.00,0.75);
    \foreach \x in {0,...,12} {\draw[] ({7.5-3+0.5*\x},-0.08) -- ({7.5-3+0.5*\x},0.08);}
\end{scope}
\end{tikzpicture}

We randomly draw $y_n  \sim \text{Unif}([-g(n),g(n)])$ independently for each $n \geq 1$ and define the continuously perturbed sequence
\begin{equation*}
    \ts_n = x_n + y_n \mod 1.
\end{equation*}
In Proposition \ref{ppccont} we show that this sequence has Poissonian pair correlation almost surely. \\

We turn to defining a discrete version of the perturbation. Start with a realization of $(\ts_n)$ and put
$$\tn_n = \min \argmin_{\substack{\nu \in \mathcal{C}_n \\ (\nexists \ k < n) \ \tn_k = \nu}} |\nu-\ts_n|,$$
i.e. we pick the nearest unoccupied finer grid point as the point $(\tn_n),$ where, in case of a tie, we choose the leftmost point. Such a free candidate always exists: There are
\begin{equation*}
    \# \mathcal C_n \asymp i_n^{1-\e}\log\log i_n
\end{equation*}
candidates, and at most
\begin{equation*}
    \#\{k < n \ | \ \tn_k = \nu \in \mathcal C_n \} \ll 2^{i_n}g(2^{i_n}) = \log\log i_n
\end{equation*}
already occupied points.

To implement ``point recycling'', we then finally consider
\begin{equation*}
    \nu_n = \begin{cases}
            x_k, & \text{if $\tn_k = x_n$ for some $k < n$}, \\
            \tn_n, & \text{else}.
    \end{cases}
\end{equation*} 
This construction recycles previously perturbed points as follows.
If a point $x_k$ is moved to the points $\tn_k = x_n$ for some $n > k$ (meaning, a later van der Corput point), we, instead of perturbing $x_n$ as usual, use this point to restore the point $x_k,$ that is $\nu_n = x_k.$

\def\L{15}
\begin{tikzpicture}[
    point/.style={circle, fill, inner sep=1.2pt},
    arr/.style={->, thick}
]

\begin{scope}
    \draw[thick] (0,0) -- (15,0) ;
    \node[point][label = {$x_1$}] (x1) at (0.5*\L,0) {} ;
    \node[point][label = {}] (x2) at (0.25*\L,0) {};
    \node[point][label = {}] (x3) at (0.75*\L,0) {} ;
    \node[point][label = {}] (x4) at (1/8*\L,0) {};
    \node[point][label = {}] (x5) at (5/8*\L,0) {} ;
    \node[point][label = {}] (x6) at (3/8*\L,0) {};
    \node[point][label = {}] (x7) at (7/8*\L,0) {} ;
    \node[point, magenta][label = {$\nu_1$}] (n1) at (7/16*\L,0) {} ;
\end{scope}

\begin{scope}[yshift= -2cm]
    \draw[thick] (0,0) -- (15,0) ;
    \node[point,magenta][label = {$\nu_{14}$}] (n14) at (0.5*\L,0) {} ;
    \node[point][label = {}] (x2) at (0.25*\L,0) {};
    \node[point][label = {}] (x3) at (0.75*\L,0) {} ;
    \node[point][label = {}] (x4) at (1/8*\L,0) {};
    \node[point][label = {}] (x5) at (5/8*\L,0) {} ;
    \node[point][label = {}] (x6) at (3/8*\L,0) {};
    \node[point][label = {}] (x7) at (7/8*\L,0) {} ;
    \node[point][label = {}] (x8) at (1/16*\L,0) {} ;
    \node[point][label = {}] (x9) at (9/16*\L,0) {};
    \node[point][label = {}] (x10) at (5/16*\L,0) {} ;
    \node[point][label = {}] (x11) at (13/16*\L,0) {};
    \node[point][label = {}] (x12) at (3/16*\L,0) {} ;
    \node[point][label = {}] (x13) at (11/16*\L,0) {};
    \node[point, magenta][label = {$x_{14}$}] (x14) at (7/16*\L,0) {} ;
    \node[point][label = {}] (x15) at (15/16*\L,0) {} ;
\end{scope}

\draw[dashed] (n1) -- (x14);
\draw[arr, magenta] (x14) -- (n14);
\draw[arr, magenta] (x1) -- (n1);

\end{tikzpicture}

\textit{Remark.} This sequence is essentially designed to have low discrepancy, it is constructed to mimic the van der Corput sequence globally. The recycling changes relatively few points per scale, which is used heavily in Proposition \ref{resto}, but ensures that, up to a few points, the point set arising from the construction is a reordering of the van der Corput point set $(x_n)_{n \leq N}.$ This will be made more apparent in Proposition \ref{disc}.

\medskip

\section{Discrepancy}

In the proof, we will work with the star-discrepancy defined as
\begin{equation*}
    D_N^*((\nu_n)_{n \leq N}) = \sup_{a \in [0,1)} \left|\frac{1}{N}\sum_{n \leq N}\1_{[0,a)}(\nu_n) - a\right|,
\end{equation*}
and we recall that $D_N^* \leq D_N \leq 2D_N^*.$

\begin{proposition}\label{disc}
    The sequence constructed above has low discrepancy, i.e.
    \begin{equation*}
        D_N\left((\nu_n)_{n \leq N}\right) = O\left(\frac{\log N}{N}\right).
    \end{equation*}
\end{proposition}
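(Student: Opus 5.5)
The plan is to compare the point multiset $\{\nu_1,\dots,\nu_N\}$ with the van der Corput set $\{x_1,\dots,x_N\}$. The latter has $N D_N^* \le \log(N+1)/\log 2$, and both star-discrepancy and extreme discrepancy change by at most $2/N$ per point that is moved. So it suffices to show that the symmetric difference of the two multisets contributes only $O(\log N)$ to the counting function $\#\{n\le N:\nu_n\in[0,a)\}$, uniformly in $a$. Counting moved points directly is not enough, because every index $n$ whose $x_n$ is not a recycled target is perturbed.

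Instead I will measure the displacement. Each $n$ falls into one of three kinds:
\begin{itemize}
\item $\nu_n=\tn_n\ne x_n$, a perturbation, with $|\nu_n-x_n|<g(2^{i_n})$;
\item $\nu_n=x_k$ with $\tn_k=x_n$ and $k<n$, a restoration;
\item $\nu_n=x_n$.
\end{itemize}
Define the map $\sigma$ by $\tn_k=x_{\sigma(k)}$. By construction $\sigma(k)\in B_{j_k}$ with $j_k=i_k+\d_k$, so $\sigma(k)\approx 2^{\d_k}k\gg k$. For a fixed $N$ and $a$, the points $x_n$ with $n\le N$ that are ``missing'' from $\{\nu_n\}_{n\le N}$ are exactly the $x_k$ whose perturbation has not yet been undone, i.e.\ $k\le N<\sigma(k)$. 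The ``extra'' points are the $\tn_k=x_{\sigma(k)}$ with $\sigma(k)>N$. Points with $\sigma(k)\le N$ cancel exactly: $\nu_k$ and $\nu_{\sigma(k)}$ together occupy $\{x_k,x_{\sigma(k)}\}$. Chains need a small check here, since $x_{\sigma(k)}$ is itself never perturbed, because its slot is used for restoration.

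The discrepancy error for $[0,a)$ is therefore bounded by the number of indices $k\le N<\sigma(k)$ for which $a$ separates $x_k$ and $\tn_k$, i.e.\ $|x_k-a|<g(2^{i_k})$. Only indices in the last $O(\log\log N)$ scale blocks below $N$ can have $\sigma(k)>N$, because $\sigma(k)\ge 2^{j_k}$ forces $i_k+\d_k\le \log_2 N$. So I count, for each such block $B_i$, the indices $k\in B_i$ with $|x_k-a|<g(2^i)=\log\log i/2^i$. The points $x_k$, $k\in B_i$, form the grid of odd multiples of $2^{-i-1}$, so this number is $O(\log\log i)$. In fact I only need blocks $i\ge \log_2 N-\d(i)-1$, which gives $\d+O(1)\asymp \log\log N$ blocks. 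The total error is then $O\big((\log\log N)^2\big)$, which is far below $O(\log N)$.

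The step I expect to be the main obstacle is the bookkeeping of the pairing $k\leftrightarrow\sigma(k)$. I must check that $\sigma$ is injective, which holds because occupied candidates are excluded. I must also check that a restored index $n=\sigma(k)$ is never itself a perturbation source, and that multiple occupancy does not create extra unpaired points. The free-candidate count $\#\mathcal C_n\asymp i^{1-\e}\log\log i$ versus $\ll\log\log i$ occupied points justifies well-definedness. Once the multiset identity
\begin{equation*}
\{\nu_n\}_{n\le N}=\big(\{x_n\}_{n\le N}\setminus\{x_k: k\le N<\sigma(k)\}\big)\cup\{\tn_k:k\le N<\sigma(k)\}
\end{equation*}
is established, the estimate above finishes the proof. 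Some care is needed in the identity with indices $k$ that are themselves restorations, which are not perturbed.
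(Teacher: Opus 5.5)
Your proposal is correct and is essentially the paper's argument. You pair each perturbed index $k$ with its restoring index $\sigma(k)$, and you note that only the unpaired indices $k\le N<\sigma(k)$ (the paper's $\mathcal B(N)$) affect the counting function. These are confined to the last $\asymp\log i_N$ dyadic blocks. That confinement comes from the upper bound $\sigma(k)<2^{j_k+1}$, not from $\sigma(k)\ge 2^{j_k}$ as you wrote. You then bound their contribution block by block using the displacement and the $2^{-i}$-spacing of $\{x_k\}_{k\in B_i}$, exactly as the paper does.

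The one difference is that you use the actual displacement $|x_k-\tn_k|<g(2^{i_k})$ instead of the paper's cruder $g(2^{i_n-\d_n})$. This gives an error $O(\log\log N\cdot\log\log\log N)$ rather than $O((\log N)^{1-\e}\log\log N\log\log\log N)$.

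Like the paper, you must read the construction so that restored indices are not themselves perturbed. Under the literal definition, $\tn_n$ exists for every $n$ and would trigger duplicate restorations, so your planned ``check'' is really an assumption about the construction that the paper's proof also makes implicitly.
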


\textit{Remark.} This statement is \textit{deterministic} and holds for all realizations of $(\nu_n).$ The implied constant does not depend on the realization.

\begin{proof} To relate the discrepancy of $(\nu_n)_{n \geq 1}$ to the (low) discrepancy of the van der Corput sequence $(x_n)_{n \geq 1},$ we observe the following. A point $\nu_n$ is either identified as a van der Corput point $x_{m(n)}$ with $m(n) > n$ or a van der Corput point $x_{\ell(n)}$ with $\ell(n) < n.$ As long as $m (n) \leq N,$ the point $x_n$ will be restored as $\nu_{m(n)}.$ Define
\begin{equation*}
    \mathcal B(N) = \{1 \leq n \leq N \ | \ \nu_n = x_{m(n)} \text{ with } m(n) > N\},
\end{equation*}
the set of indices for which $x_n$ does not appear in $\{\nu_1,\dotsc,\nu_N\}.$ Since $n \leq N,$ $\ell(n) \leq N$ for all $n.$ \\
Record that
\begin{equation*}
    D_N^*((\nu_n)_{n \leq N}) \leq D_N^*((x_n)_{n \leq N}) + \frac{1}{N}\sup_{t \in [0,1)}\left|\#\{1 \leq n \leq N \ | \ \nu_n \leq t\} - \#\{1 \leq n \leq N \ | \ x_n \leq t\}\right|.
\end{equation*}
We want to show that the displacement error
\begin{equation}\label{discplace}
    \frac{1}{N}\sup_{t \in [0,1)}\left|\#\{1 \leq n \leq N \ | \ \nu_n \leq t\} - \#\{1 \leq n \leq N \ | \ x_n \leq t\}\right|
\end{equation}
is negligible against $\frac{\log N}{N}.$ Notice that, if $n \notin \mathcal B(N),$ we have a correspondence $x_n = \nu_{m(n)}$ and $x_{m(n)} = \nu_n,$ so these do not contribute to the displacement error, so, for $t \in [0,1)$ fixed,
\begin{align*}
    \#\{1 \leq n \leq N \ | \ \nu_n \leq t\} - \#\{1 \leq n \leq N \ | \ x_n \leq t\} &= \sum_{n \in \mathcal{B}(N)} \1(x_n \leq t) - \1(\nu_n \leq t) \\
    &=\sum_{n \in \mathcal B(N)} \1(x_n \leq t) - \1(x_{m(n)} \leq t). 
\end{align*}
Consider now a fixed $n \in \mathcal B(N)$ with $n \in B_{i_n},$ then we know that $m(n) \in B_{i_n +\d_n},$ and we recall that $\d_n = \lfloor (1-\e)\log i_n\rfloor.$ Since $m(n) > N$ and $n \leq N,$ we know 
$$i_n > i_N-\d_N \eqcolon J_N,$$
else $i_n+\d_n \leq i_N.$ \\
This implies
\begin{equation}\label{Jthing}
    \mathcal B(N) \subseteq \bigcup_{J_N \leq i \leq i_N} B_i.
\end{equation}
 Notice that, by construction,
\begin{equation*}
    \|x_n-x_{m(n)}\| \leq \begin{cases}
            g(2^{i_n}), & \text{ if } \tn_k = x_n \text{ for some } k < n, \\
            g(2^{i_n-\d_n}), & \text{ else.}
    \end{cases}
\end{equation*}
The second term dominates, hence, upon recalling the definition of $g(\cdot)$ in (\ref{par}), 
\begin{equation*}
    \|x_n-x_{m(n)}\| \ll \frac{i_n^{1-\e}\log\log i_n}{2^{i_n}}.
\end{equation*}
Returning to (\ref{discplace}) we see that the summand
\begin{equation*}
    \1(x_n \leq t) - \1(x_{m(n)} \leq t)
\end{equation*}
contributes (is non-zero), whenever $t$ lies between $x_n$ and $x_{m(n)},$ so, together with (\ref{Jthing}), this gives
\begin{equation*}
    \sum_{n \in \mathcal B(N)} \1(x_n \leq t) - \1(x_{m(n)} \leq t) \leq \sum_{J_N \leq i_n \leq i_N} \1\left(\|x_n-t\| \leq \frac{i_n^{1-\e}\log\log i_n}{2^{i_n}}\right),
\end{equation*}
so
\begin{align*}
    \frac{1}{N}\sup_{t \in [0,1)}\left|\#\{1 \leq n \leq N \ | \ \nu_n \leq t\} - \#\{1 \leq n \leq N \ | \ x_n \leq t\}\right| \ll \frac{1}{N} \cdot(i_N - J_N) \cdot i_N^{1-\e} \log\log i_N
\end{align*}
and with $i_N - J \asymp \log i_N,$ we get in total
\begin{align*}
    D_N^*((\nu_n)_{n \leq N}) \ll \frac{\log N}{N} + \frac{(\log N)^{1-\e}\log\log N \log\log \log N}{N} = O\left(\frac{\log N}{N}\right).
\end{align*}
\end{proof}

\section{Pair correlation} 
This is the more involved part of the proof. Define
\begin{equation*}
    R_{s,N}((x_n)) = \frac{1}{N}\#\left\{1 \leq n \neq m \leq N \ | \ \|x_n-x_m\| \leq \frac{s}{N}\right\}.
\end{equation*}
The strategy to obtain Poissonian pair correlation, that is $R_{s,N}((\nu_n)) \rightarrow 2s,$ is to reduce from $(\nu_n)$ to the sequence $(\ts_n)$ in two steps. For the continuously perturbed sequence $(\ts_n)$, the almost sure Poissonian pair correlation is shown in Proposition \ref{ppccont}.

\subsection{Reduction to the continuously perturbed sequence.} Recall that $(\tn_n)$ is the discretized perturbed sequence with no point recycling. To avoid dealing with the restoration throughout, we first show that the pair correlation of $(\nu_n)$ is asymptotically the same as the pair correlation of $(\tn_n).$ This comes from the fact that the recycling procedure changes only relatively few indices. 

We repeatedly use the following argument: Assume that $\|\tn_n-t\| \leq \frac{s}{N}$ for some $t \in [0,1).$ Then
\begin{equation*}
    \|x_n-t\| \leq \|x_n-\tn_n\| + \|\tn_n - t\| \leq \frac{\log\log i_n}{2^{i_n}}+\frac{s}{N}.
\end{equation*}
Thus, we can bound
\begin{equation*}
    \#\left\{n \in B_i \ | \ \|\tn_n - t \| \leq \frac{s}{N}\right\} \ll \# \left\{n \in B_i \ | \ \|x_n-t\| \leq \frac{s}{N}+\frac{\log\log i}{2^i}\right\}.
\end{equation*}
Note that the van der Corput points $x_n$ with $n < 2^{i}$ have spacing $\frac{1}{2^{i}},$ hence we obtain
\begin{equation}\label{tri}
    \sup_{t \in [0,1)}\#\left\{n \in B_i \ | \ \|\tn_n-t\| \leq \frac{s}{N}\right\} \ll s2^{i-i_N}+\log\log i.
\end{equation}
The same estimate holds verbatim for $(\ts_n).$ 
Similarly, if $\|\nu_n -t \| \leq \frac{s}{N}$ for some $t \in [0,1),$ then
\begin{equation*}
    \|x_n-t\| \leq \|x_n-\nu_n\|+\|\nu_n-t\| \leq \frac{i_n^{1-\e}\log\log i_n}{2^{i_n}} + \frac{s}{N}.
\end{equation*}
We then obtain
\begin{equation}\label{tri2}
    \sup_{t \in [0,1)}\#\left\{n \in B_i \ | \ \|\nu_n-t\| \leq \frac{s}{N}\right\} \ll s2^{i-i_N}+i^{1-\e}\log\log i.
\end{equation}

\medskip

We show now that the point restoration does not change the pair correlation. To this end, put
\begin{equation*}
    A = \{1 \leq n \leq N \ | \ \nu_n \neq \tn_n\}.
\end{equation*}
Notice that
\begin{equation*}
    \# A \ll \frac{2^{i_N}}{i_N^{1-\e}}.
\end{equation*}
This is by construction: In $A,$ we collect points that are used for the recycling process rather than ordinarily perturbed. In each dyadic block $B_i,$ these points must come from the previous block $B_\ell$ with $\ell = i -\lfloor (1-\e)\log i\rfloor,$ so there cannot be more than $\#B_\ell \asymp \frac{2^i}{i^{1-\e}}$ such points. Write then
\begin{equation*}
    \#A = \# \{1 \leq n \leq N \ | \nu_n \neq \tn_n\} = \sum_{i \leq i_N} \#\{n \in B_i \ | \ \nu_n \neq \tn_n\} = \sum_{i \leq i_N} \frac{2^i}{i^{1-\e}}.
\end{equation*}
Note that
\begin{equation*}
    \frac{2^{i+1}}{(i+1)^{1-\e}}\cdot \frac{i^{1-\e}}{2^i} \geq 2^\e > 1,
\end{equation*}
so
\begin{equation*}
    \sum_{i \leq i_N} \frac{2^{i}}{i^{1-\e}} = \sum_{k \leq i_N-1} \frac{2^{i_N-k}}{(i_N-k)^{1-\e}} \leq \frac{2^{i_N}}{i_N^{1-\e}}\sum_{k \leq i_N-1} 2^{-\e k} \ll \frac{2^{i_N}}{i_N^{1-\e}}.
\end{equation*}

\begin{proposition} \label{resto}
    The sequences $(\nu_n)$ and $(\tn_n)$ asymptotically have the same pair correlation.
\end{proposition}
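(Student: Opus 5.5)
We show that $|R_{s,N}((\nu_n)) - R_{s,N}((\tn_n))| \to 0$ for every fixed $s>0$. The two sequences differ only at indices in $A$, so we compare the pair counts directly. Every pair $(n,m)$, $n\neq m\le N$, with neither index in $A$ contributes equally to both statistics. Hence
\begin{equation*}
N\left|R_{s,N}((\nu_n)) - R_{s,N}((\tn_n))\right| \le 2\sum_{n\in A}\left( \#\Big\{m\le N : \|\nu_m-\nu_n\|\le \tfrac{s}{N}\Big\} + \#\Big\{m\le N : \|\tn_m-\tn_n\|\le \tfrac{s}{N}\Big\}\right).
\end{equation*}
It therefore suffices to show that both sums on the right are $o(N)$.

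For each fixed $n\in A$, we bound the inner counts uniformly in the centre $t=\nu_n$ (respectively $t=\tn_n$). We split $m\le N$ into dyadic blocks $B_i$ with $i\le i_N$ and apply (\ref{tri2}) and (\ref{tri}) blockwise:
\begin{equation*}
\sup_t \#\Big\{m\le N : \|\nu_m - t\|\le \tfrac{s}{N}\Big\} \ll \sum_{i\le i_N}\left(s2^{i-i_N} + i^{1-\e}\log\log i\right) \ll s + i_N^{2-\e}\log\log i_N .
\end{equation*}
The analogous bound for $(\tn_n)$ is even better, namely $\ll s + i_N\log\log i_N$. Combining this with $\#A \ll 2^{i_N}/i_N^{1-\e}$ gives
\begin{equation*}
N\left|R_{s,N}((\nu_n)) - R_{s,N}((\tn_n))\right| \ll \frac{2^{i_N}}{i_N^{1-\e}}\left(s + i_N^{2-\e}\log\log i_N\right),
\end{equation*}
which is \emph{not} $o(N)$. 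The crude blockwise sum loses too much, so this is the main obstacle.

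The remedy is to be smarter about small blocks. For $i$ much smaller than $i_N$, the block $B_i$ contains few points in total, so summing the sup-bound over all $i$ is wasteful. Instead, we cut at a threshold $I$, say $I = i_N - 3\log i_N$.

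For the blocks $i\ge I$, we still use the sup bound. This contributes $\ll s + (i_N-I)\, i_N^{1-\e}\log\log i_N \ll i_N^{1-\e}\log i_N\log\log i_N$ per $n$. Multiplied by $\#A$, the total is $\ll 2^{i_N}\log i_N\log\log i_N$, still too large by a logarithmic factor.

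So even this refinement is insufficient. We also need to sharpen $\#A$: only those $n\in A$ whose partner lies close to the scale $i_N$ really matter. Alternatively, we can average over the centre $t$ rather than taking a supremum. The pairs $(n,m)$ with $n\in A$ and $m\in B_i$ can be counted as
\begin{equation*}
\sum_{m\in B_i}\#\Big\{n\in A : \|\nu_n-\nu_m\|\le \tfrac{s}{N}\Big\}.
\end{equation*}
We bound this using that the points $\nu_n$ with $n\in A\cap B_{i'}$ are restored van der Corput points $x_k$ with $k\in B_{i'-\d}$. These points are $2^{-(i'-\d)}$-separated, so an interval of length $s/N$ contains $\ll 1 + s2^{i'-\d-i_N}$ of them. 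The extra factor $2^{-\d}\approx i'^{-(1-\e)}$ gained here should exactly compensate the loss $i^{1-\e}$ in (\ref{tri2}).

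Summing over blocks then gives a bound of the form $\sum_{i,i'\le i_N} 2^{i}\big(2^{i'-i_N} i'^{-(1-\e)} + 2^{-(i'-\d)}\cdots\big)$. Here we choose, pair by pair, whichever of the two counting orders (over $m\in B_i$ or over $n\in A\cap B_{i'}$) is cheaper, using the larger block as the outer sum. Doing this carefully gives $\ll 2^{i_N} i_N^{-\e'}$ for some $\e' > 0$, that is $o(N)$, and the same argument applies to $(\tn_n)$.

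Carrying out this bookkeeping over pairs of blocks, with the separation of the restored points, is where the real work lies.
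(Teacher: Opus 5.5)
Your reduction to the two sums over $n\in A$ and your diagnosis that the crude bound fails are both correct. However, the proposal stops before the decisive step, and the scheme you sketch for it cannot reach $o(N)$ with the ingredients you allow yourself. Test it on the single interaction between $A$ and the top block $B_{i_N}$. With $m\in B_{i_N}$ outside and the separation of restored points inside, even the sharp fact that a window of length $2s/N$ contains $O(s+1)$ restored points only gives $\#B_{i_N}\cdot O(s+1)=O(N)$. With $n\in A$ outside and (\ref{tri2}) inside, you get $\frac{2^{i_N}}{i_N^{1-\e}}\bigl(s+i_N^{1-\e}\log\log i_N\bigr)=O(N\log\log i_N)$. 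Neither is $o(N)$, so choosing the cheaper counting order block by block does not help. A gain that ``exactly compensates'' the $i^{1-\e}$ loss in (\ref{tri2}) leaves you at order $N$ with no saving, and the asserted final bound $\ll 2^{i_N}i_N^{-\e'}$ is unsupported. (Also, in a bound $\#(\text{outer})\cdot\sup_t\#(\text{inner})$ you want the smaller set outside, not the larger one.)

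The missing idea is that (\ref{tri2}) should never be applied to the partner index $m$. Instead, split $m$ according to whether $m\in A$. If $m\notin A$, then $\nu_m=\tn_m$ and (\ref{tri}) applies, losing only $\log\log i$ per block. If $m\in A$, then $\nu_m=x_k$ with $k<m\le N$, and distinct $m$ give distinct $k$ (since $\tn_k=x_m$ determines $m$). Hence an $s/N$-window contains $\ll s+1$ such points. With this split, your own threshold $I=i_N-3\log i_N$ already works. For $m<2^I$, count as $2^I\cdot\sup_t\#\{n\in A:\|\nu_n-t\|\le s/N\}\ll 2^{i_N}i_N^{-3}(s+1)$ (respectively $\ll 2^{i_N}i_N^{-3}(s+i_N\log\log i_N)$ for $(\tn_n)$). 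For $m\ge 2^I$, count as $\#A\cdot\bigl(s+1+\log i_N\log\log i_N\bigr)\ll 2^{i_N}i_N^{-(1-\e)}\log i_N\log\log i_N$. All of these are $o(N)$. This is exactly the structure of the paper's proof, which cuts at $2^{\lfloor i_N-i_N^{1-2\e}\rfloor}$ instead. There the lower range is superpolynomially small in $i_N$, so crude bounds suffice, while the upper range spans only $i_N^{1-2\e}$ blocks and gives $\#A\cdot i_N^{1-2\e}\log\log i_N\ll 2^{i_N}i_N^{-\e}\log\log i_N$. For $(\nu_n)$ the paper uses the same split into $m\in A_2$ and $m\notin A_2$.
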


\begin{proof} 
    It suffices to show
    \begin{equation}\label{claim4.1}
        \#\left\{1 \leq n \neq m \leq N, \ n \in A \ | \ \|\tn_n-\tn_m\| \leq \frac{s}{N}\right\} = o(N)
    \end{equation}
    and
    \begin{equation}\label{claim4.1.2}
        \#\left\{1 \leq n \neq m \leq N, \ n \in A \ | \ \|\nu_n-\nu_m\| \leq \frac{s}{N}\right\} = o(N).
    \end{equation}
    Indeed, write
    \begin{align*}
         \#\left\{1 \leq n \neq m \leq N \ | \ \|\nu_n-\nu_m\| \leq \frac{s}{N}\right\} = \#&\left\{1 \leq n \neq m \leq N, \ n,m \notin A \ | \ \|\nu_n-\nu_m\| \leq \frac{s}{N}\right\} \\
        &+  \#\left\{1 \leq n\neq m \leq N, \ n \in A \ | \ \|\nu_n-\nu_m\| \leq \frac{s}{N}\right\}. 
    \end{align*}
    Noting that, if $n,m \notin A,$ $\nu_n = \tn_n$ and $\nu_m = \tn_m$ and assuming (\ref{claim4.1}) and (\ref{claim4.1.2}), this gives 
    \begin{align*}
        \frac{1}{N}& \#\left\{1 \leq n \neq m \leq N \ | \ \|\nu_n-\nu_m\| \leq \frac{s}{N}\right\} \\
        &= \frac{1}{N}\#\left\{1 \leq n \neq m \leq N, \ n,m \notin A \ | \ \|\tn_n-\tn_m\| \leq \frac{s}{N}\right\} + o(1) \\
        &= \frac{1}{N}\#\left\{1 \leq n \neq m \leq N \ | \ \|\tn_n-\tn_m\| \leq \frac{s}{N}\right\} + o(1),
    \end{align*}
    which is the claim. \\
    
    In order to show (\ref{claim4.1}), we treat big and small indices separately. To this end, put
    \begin{align*}
        I_1 = \{1 \leq n \leq 2^{\lfloor i_N- i_N^{1-2\e}\rfloor}\}, \quad I_2 = \{2^{\lfloor i_N- i_N^{1-2\e}\rfloor} < n \leq N\}
    \end{align*}
    and 
    \begin{equation*}
        A_1 = A \cap I_1, \quad A_2 = A \cap I_2.
    \end{equation*}
    Notice that, by (\ref{tri}), resp. (\ref{tri2}), we have
    \begin{align*}
        &\sup_{t \in [0,1)} \#\left\{n \in I_1 \ | \ \|\tn_n-t\| \leq \frac{s}{N}\right\} \ll (i_N-i_N^{1-2\e})\log\log(i_N-i_N^{1-2\e}), \\
        &\sup_{t \in [0,1)} \#\left\{n \in I_2 \ | \ \|\tn_n - t \| \leq \frac{s}{N}\right\} \ll i_N^{1-2\e} \log\log i_N, \\\
        &\sup_{t \in [0,1)} \#\left\{1 \leq n \leq N \ | \ \|\nu_n-t\| \leq \frac{s}{N}\right\} \ll i_N^{1-\e} \log\log i_N. 
    \end{align*}
    and
    \begin{align*}
        \# A_1 \ll \frac{2^{i_N-i_N^{1-2\e}}}{i_N^{1-\e}}, \quad \#A_2 \leq \#A \ll \frac{2^{i_N}}{i_N^{1-\e}}.
    \end{align*}
    For $n \in I_1,$ we have
    \begin{align*}
        \#A_1& \cdot \sup_{t \in [0,1)}\#\left\{n \in I_1 \ | \ \|\tn_n-t\| \leq \frac{s}{N}\right\}  \\
        &\ll \frac{2^{i_N-i_N^{1-2\e}}}{i_N^{1-\e}}(i_N-i_N^{1-2\e})\log\log (i_N-i_N^{1-2\e}) = o(N)
    \end{align*}
    and
    \begin{align*}
        \# \left\{n \in I_1, \ m \in A_2 \ | \ \|\tn_n-\tn_m\| \leq \frac{s}{N}\right\} \leq \#I_1 \cdot \sup_{t \in [0,1)} \left\{n \in A_2 \ | \ \|\tn_n - t \| \leq \frac{s}{N}\right\} \\
        \ll 2^{i_N- i_N^{1-2\e}}i_N^{1-2\e}\log\log i_N = o(N).
    \end{align*}
    Similarly, for $n \in I_2,$ we have
    \begin{equation*}
        \#A_1 \cdot \sup_{t \in [0,1)} \#\left\{n \in I_2 \ | \ \|\tn_n - t \| \leq \frac{s}{N}\right\} \ll \frac{2^{i_N-i_N^{1-2\e}}}{i_N^{1-\e}}i_N^{1-2\e}\log\log i_N = o(N)
    \end{equation*}
    and \begin{align*}    
        \#A_2  \cdot \sup_{t \in [0,1)}\#\left\{n \in I_2 \ | \ \|\tn_n-t\| \leq \frac{s}{N}\right\} &\ll \#A \cdot \sup_{t}\#\left\{n \in I_2 \ | \ \|\tn_n-t\| \leq \frac{s}{N}\right\} \\
        & \ll \frac{2^{i_N}}{i_N^{1-\e}}\cdot i_N^{1-2\e}\log\log i_N = o(N).
    \end{align*} 
    
    Turning to (\ref{claim4.1.2}), we notice that early restored points are harmless
    \begin{equation*}
        \#A_1 \cdot \sup_{t \in [0,1)}\#\left\{1 \leq n \leq N \ | \|\nu_n-t\| \leq \frac{s}{N}\right\} \ll \frac{2^{i_N}}{2^{i_N^{1-2\e}}i_N^{1-\e}} \cdot i_N^{1-\e} \log\log i_N = o(N).  
    \end{equation*}
    For $n \in A_2$ and $m \in I_1,$ we write
    \begin{equation*}
        \#\left\{n \in A_2, \ m \in I_1 \ | \ \|\nu_n-\nu_m\| \leq  \frac{s}{N}\right\} \leq \#I_1 \cdot \sup_{t \in [0,1)}\left\{n \in A_2 \ | \ \|\nu_n-t\| \leq \frac{s}{N}\right\}.
    \end{equation*}
    We now require a better bound than the general (\ref{tri2}). If $\nu_n$ is a recycled point, i.e. $n \in A,$ then it comes from a point in $B_{i_n-\lfloor (1-\e)\log i_n\rfloor}.$ Put $\ell_n = i_n-\lfloor (1-\e)\log i_n\rfloor$. Since $i_n \in (i_N-\lfloor i_N^{1-2\e}\rfloor, i_N],$ an interval of length $\lfloor i_N^{1-2\e}\rfloor,$ the $\ell_n$ also lie in an interval of length $\lfloor i_N^{1-2\e}\rfloor,$ and $\ell_n \leq i_N-(1-\e)\log i_N.$ Inside a block $B_{\ell_n},$ the van der Corput points have spacing
    \begin{equation*}
        2^{-\ell_n} \gg \frac{i_N^{1-\e}}{2^{i_N}},
    \end{equation*}
    hence an $\frac{s}{N}-$interval contains $\ll_s 1$ such points. Summing this yields
    \begin{equation*}
        \sup_{t \in [0,1)}\#\left\{n \in A_2 \ | \ \|\nu_n-t\| \leq \frac{s}{N}\right\} \ll i_N^{1-2\e}.
    \end{equation*}
    Thus
    \begin{equation*}
        \#I_1 \cdot \sup_{t \in [0,1)}\left\{n \in A_2 \ | \ \|\nu_n-t\| \leq \frac{s}{N}\right\} \ll 2^{i_N-i_N^{1-2\e}}i_N^{1-2\e} = o(N).
    \end{equation*}
    If $m \in I_2,$ we split this into $m \in A_2$ and $m \notin A_2.$ If $m \notin A_2,$ $\nu_m = \tn_m$ and we are done. Else,
    \begin{align*}
        \#&\left\{n,m \in A_2 \ | \ \|\nu_n-\nu_m\| \leq \frac{s}{N}\right\} \\
        &\leq \#A_2 \cdot \sup_{t \in [0,1)} \#\left\{n \in A_2 \ | \ \|\nu_n-t\| \leq \frac{s}{N}\right\} \ll \frac{2^{i_N}}{i_N^{1-\e}}i_n^{1-2\e} = o(N).
    \end{align*}
    This concludes the proof. 
\end{proof}

We now show that we can further reduce to the continuously perturbed sequence $(\ts_n).$ This follows roughly from the fact that the distance between $\tn_n$ an $\ts_n$ for large $n$ becomes very small, and that there are relatively few points for small $n.$

\begin{proposition}\label{samepc}
    If $(\ts_n)$ has Poissonian pair correlation, then so does $(\tn_n).$
\end{proposition}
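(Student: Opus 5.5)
The plan is a sandwich argument. I would show that $\tn_n$ and $\ts_n$ are at distance $o(1/N)$ for all indices $n\le N$ outside a set of small indices. I would then show that pairs involving those small indices contribute only $o(N)$ to the pair count.

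\emph{Step 1: pointwise displacement bound.} I would first bound $\|\tn_n-\ts_n\|$ deterministically in terms of $i_n$. The candidates in $\mathcal C_n$ form a grid of spacing $2^{1-j_n}$ that covers the $g(2^{i_n})$-neighbourhood of $x_n$. Since $|y_n|\le g(n)=g(2^{i_n})$, the point $\ts_n$ lies in the convex hull of $\mathcal C_n$, up to one grid step at the boundary. By the count recorded after the construction, at most $\ll \log\log i_n$ candidates are already occupied. Hence walking outward from $\ts_n$ reaches a free candidate within $O(\log\log i_n)$ grid steps, which gives
\begin{equation*}
\|\tn_n-\ts_n\| \ll \frac{\log\log i_n}{2^{j_n}} \ll \frac{\log\log i_n}{2^{i_n}\, i_n^{1-\e}}.
\end{equation*}
I expect this step to be the main obstacle. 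One has to check carefully that the occupied candidates cannot all cluster near $\ts_n$ in a way that forces a large jump. This should be fine because the occupied ones number only $O(\log\log i_n)$. One also has to handle the reduction mod $1$, and the edge case where $\ts_n$ sits at the end of the candidate window and free candidates exist on one side only. Even in that case the nearest free candidate is within $O(\log\log i_n)$ steps, so the bound should survive.

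\emph{Step 2: splitting the indices.} I would put $L=L(N)=\lfloor \tfrac{1-\e}{2}\log_2 i_N\rfloor$ and split $\{1,\dots,N\}$ into $I_1=\{n\le 2^{i_N-L}\}$ and $I_2=\{2^{i_N-L}<n\le N\}$. For $n\in I_2$ we have $i_n\ge i_N-L$, so Step 1 gives
\begin{equation*}
\|\tn_n-\ts_n\|\ll \frac{2^{L}\log\log i_N}{2^{i_N} i_N^{1-\e}} \ll \frac{\log\log i_N}{i_N^{(1-\e)/2}}\cdot\frac1N = o\!\left(\frac1N\right),
\end{equation*}
uniformly in $n\in I_2$. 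For pairs with at least one index in $I_1$, I would use \eqref{tri} for both $(\tn_n)$ and $(\ts_n)$, summed over the blocks $i\le i_N$. This gives $\sup_t\#\{n\le N:\|\tn_n-t\|\le s/N\}\ll s+\log\log i_N$, and the same for $\ts$. Consequently the number of such pairs is
\begin{equation*}
\ll \#I_1\,(s+\log\log i_N)\ll \frac{2^{i_N}\log\log i_N}{i_N^{(1-\e)/2}}=o(N).
\end{equation*}

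\emph{Step 3: sandwich.} Fix $\eta\in(0,s)$. For $N$ large, all $n\in I_2$ satisfy $\|\tn_n-\ts_n\|\le \eta/(2N)$. So for $n,m\in I_2$, the condition $\|\ts_n-\ts_m\|\le (s-\eta)/N$ implies $\|\tn_n-\tn_m\|\le s/N$, and the latter implies $\|\ts_n-\ts_m\|\le (s+\eta)/N$. Adding back the $o(N)$ pairs involving $I_1$ gives
\begin{equation*}
R_{s-\eta,N}((\ts_n))-o(1)\le R_{s,N}((\tn_n))\le R_{s+\eta,N}((\ts_n))+o(1).
\end{equation*}
By hypothesis the outer terms tend to $2(s\mp\eta)$. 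Letting $\eta\to 0$ then yields $R_{s,N}((\tn_n))\to 2s$, which is the claim.
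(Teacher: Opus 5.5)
\textbf{The gap is in Step 2, in your estimate for pairs with an index in $I_1$.} Summing \eqref{tri} over the blocks $i\le i_N$ does not give $s+\log\log i_N$. The term $s2^{i-i_N}$ sums to $O(s)$, but $\log\log i$ does not decay, so what actually follows is
\[
\sup_{t}\#\Bigl\{n\le N:\|\tn_n-t\|\le \frac sN\Bigr\}\ll s+i_N\log\log i_N ,
\]
and likewise for $\ts_n$. This is not just lossy bookkeeping. The support of $\ts_n$, $n\in B_i$, is wider than the spacing $2^{-i}$ of the $x_n$ in that block. So for suitable values of the $y_n$, one point from every block lies within $s/N$ of the same $t$. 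Even for a typical realization, the maximal local count is unbounded, of order $\log N/\log\log N$.

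With the correct bound, your estimate only gives
\[
\#I_1\,(s+i_N\log\log i_N)\asymp N\,i_N^{(1+\e)/2}\log\log i_N,
\]
which is not $o(N)$. Since $\#I_1$ is smaller than $N$ only by the factor $i_N^{(1-\e)/2}$, the crude product $\#I_1\cdot\sup_t(\cdots)$ cannot close with your two-range split. The lower half of your sandwich genuinely needs this estimate for $(\ts_n)$.

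\textbf{How to repair it.} The paper uses three ranges.
\begin{enumerate}
\item $I_1=\{n\le 2^k\}$ with $k=i_N-i_N^{1-2\e}$. Then $\#I_1/N$ decays faster than any power of $i_N$ and absorbs the factor $i_N\log\log i_N$.
\item A middle range $2^k<n\le 2^\ell$, with $\ell=i_N-\lfloor(1-\e)\log i_N\rfloor+\log\log i_N$. It spans only $\asymp i_N^{1-2\e}$ blocks and has size $\asymp N\log i_N/i_N^{1-\e}$. Hence pairs inside it, or between it and the top range (which spans $O(\log i_N)$ blocks), are $o(N)$.
\item Only the top range $n>2^\ell$ is compared with $(\ts_n)$ through the displacement bound.
\end{enumerate}
Alternatively, your two ranges suffice if you charge each close pair to its member in the lower dyadic block. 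For $n\in B_i$, the partners $m$ with $i_m\ge i$ number $\ll s+(i_N-i+1)\log\log i_N$, and
\[
\sum_{i\le i_N-L}2^i\bigl(s+(i_N-i+1)\log\log i_N\bigr)\ll 2^{i_N-L}\bigl(s+L\log\log i_N\bigr)=o(N).
\]

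\textbf{A smaller point in Step 1.} Your Step 1 is more careful than the paper's, since it accounts for occupied candidates. However, the claim that $\ts_n$ lies within a grid step of the hull of $\mathcal C_n$ fails when the window around $x_n$ crosses $0$ or $1$. There $\mathcal C_n$ is truncated while $\ts_n$ is reduced mod $1$, so $\|\tn_n-\ts_n\|$ can be of order $g(2^{i_n})\gg 1/N$. Only $O(\log\log i)$ indices per block are affected, so they contribute $O_s\bigl((i_N\log\log i_N)^2\bigr)=o(N)$ pairs. They must nevertheless be excluded from your uniform $o(1/N)$ claim on $I_2$.
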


\begin{proof}
    Let $N$ be given. We split the indices into small, medium and large as follows
    \begin{equation*}
        I_1 = \left\{1 \leq n \leq 2^k\right\}, \quad I_2 = \left\{2^k < n \leq 2^\ell\right\}, \quad I_3 = \left\{2^\ell < n \leq N\right\}
    \end{equation*}
    for the cutoff parameters
    \begin{equation*}
        \ell = {j} + \log\log i_N, \quad k = i_N - i_N^{1-2\e}
    \end{equation*} 
    where, as before,
    \begin{equation*}
        j = i_N-\lfloor(1-\e)\log i_N\rfloor.
    \end{equation*}
    We record that
    \begin{equation*}
        \#I_1 \asymp \frac{N}{2^{\log^{1-2\e}N}}, \quad \#I_2 \asymp \frac{N \log\log N}{(\log N)^{1-\e}}, \quad \#I_3 \asymp N.
    \end{equation*}
    Write
    \begin{align*}
        \frac{1}{N} &\#\left\{ 1 \leq n \neq m \leq N \ | \ \|\tn_n-\tn_m\| \leq \frac{s}{N}\right\} \\
        &= \frac{1}{N}\left( \sum_{\substack{ n \neq m \\ n,m \in I_1}} + \sum_{\substack{ n \neq m \\ n,m \in I_2}} + \sum_{\substack{ n \neq m \\ n,m \in I_3}} + 2\sum_{\substack{ n \neq m \\ n \in I_1, \ m \in I_2 \cup I_3}} + 2 \sum_{\substack{ n \neq m \\ n \in I_2, \ m \in I_3}} \right) \1\left(\|\tn_n-\tn_m\| \leq \frac{s}{N}\right),
    \end{align*}
    and similarly
    \begin{align*}
        \frac{1}{N}&\#\left\{1 \leq n \neq m \leq N \ | \ \|\ts_n-\ts_n\| \leq \frac{s}{N}\right\} \\
        &= \frac{1}{N}\left( \sum_{\substack{ n \neq m \\ n,m \in I_1}} + \sum_{\substack{ n \neq m \\ n,m \in I_2}} + \sum_{\substack{ n \neq m \\ n,m \in I_3}} + 2\sum_{\substack{ n \neq m \\ n \in I_1, \ m \in I_2 \cup I_3}} + 2 \sum_{\substack{ n \neq m \\ n \in I_2, \ m \in I_3}} \right) \1\left(\|\ts_n-\ts_m\| \leq \frac{s}{N}\right). 
    \end{align*}
    It suffices to show 
    \begin{enumerate}[label= (\roman*)]
        \item $$\frac{1}{N}\sum_{\substack{n \neq m \\ n,m \in I_3}}\1\left(\|\tn_n-\tn_m\| \leq \frac{s}{N}\right) = \frac{1}{N}\sum_{\substack{n \neq m  \\ n,m \in I_3}}\1\left(\|\ts_n-\ts_m\| \leq \frac{s}{N}\right) + o(1),$$ \\
        \item $$\sum_{\substack{n \neq m \\ n\in I_1, \ 1 \leq m \leq N }}\1\left(\|\tn_n-\tn_m\| \leq \frac{s}{N}\right) = o(N),$$ \\
        \item $$\sum_{\substack{n \neq m  \\ n,m \in I_2}}\1\left(\|\tn_n-\tn_m\| \leq \frac{s}{N}\right) = o(N), \quad \text{and} $$ \\
        \item $$\sum_{\substack{n \neq m \\ n \in I_2, \ m \in I_3}}\1\left(\|\tn_n-\tn_m\| \leq \frac{s}{N}\right) = o(N).$$
    \end{enumerate}
    Noting that these also hold for $(\ts_n),$ we indeed get
    \begin{align*}
        \frac{1}{N} &\#\left\{ 1 \leq n \neq m \leq N \ | \ \|\tn_n-\tn_m\| \leq \frac{s}{N}\right\} \\
        &= \frac{1}{N}\left( \sum_{\substack{ n \neq m \\ n,m \in I_1}} + \sum_{\substack{ n \neq m \\ n,m \in I_2}} + \sum_{\substack{ n \neq m \\ n,m \in I_3}} + 2\sum_{\substack{ n \neq m \\ n \in I_1, \ m \in I_2 \cup I_3}} + 2 \sum_{\substack{ n \neq m \\ n \in I_2, \ m \in I_3}} \right) \1\left(\|\tn_n-\tn_m\| \leq \frac{s}{N}\right) \\
        &= \frac{1}{N} \sum_{\substack{ n \neq m \\ n,m \in I_3}} \1\left(\|\tn_n-\tn_m\| \leq \frac{s}{N}\right) + o_N(1) = \frac{1}{N} \sum_{\substack{n \neq m \\ n,m \in I_3}} \1\left(\|\ts_n-\ts_m\| \leq \frac{s}{N}\right) + o_N(1) \\
        & = \frac{1}{N} \#\left\{1 \leq n \neq m \leq N \ | \ \|\ts_n-\ts_m\| \leq \frac{s}{N}\right\} + o_N(1).
    \end{align*}

    \textit{ad (i).} We first notice that, for large indices $n \in I_3$, 
    \begin{equation*}
      \sup_{i_n \geq \ell} \ |\ts_n-\tn_n| \ll \frac{1}{2^\ell \ell^{1-\e}}
    \end{equation*}
    due to spacing of points in $\mathcal C_n$ and $\tn_n = \argmin_{\nu \in \mathcal C_n} |\nu - \ts_n|.$ \\ 
    Hence, if $i_n \geq \ell,$
    \begin{equation*}
        \frac{1}{2^{i_n}i_n^{1-\e}} \ll \frac{1}{N \log \log N} = o\left(\frac{1}{N}\right),
    \end{equation*}
    so we obtain 
    \begin{align*}
        \frac{1}{N}&\#\left\{2^\ell \leq n \neq m \leq N \ | \ \|\tn_n-\tn_m\| \leq \frac{s}{N}\right\} \\
        &= \frac{1}{N}\#\left\{2^\ell \leq n \neq m \leq N \ | \ \|\ts_n-\ts_m\| \leq \frac{s}{N}\right\} + o(1)
    \end{align*}
    as $N \rightarrow \infty.$ \\
    As in the proof of Proposition \ref{resto}, we use the estimate
    \begin{equation*}
        \#\left\{n \in I_i, \ m \in I_j \ | \ \|\tn_n-\tn_m\| \leq \frac{s}{N}\right\} \ll \#I_i \cdot \sup_{t \in [0,1)}\#\left\{m \in I_j \ | \ \|\tn_m-t\| \leq \frac{s}{N}\right\}
    \end{equation*}
    for $i,j \in \{1,2,3\}.$
    \textit{ad (ii).} Using that $\# I_1$ is small, we get
    \begin{align*}
        \#I_1 \cdot \sup_{t \in [0,1)}\#\left\{m \in I_j \ | \ \|\tn_m-t\| \leq \frac{s}{N}\right\} &\ll \begin{cases}
            \frac{N}{2^{\log^{1-2\e}N}} \cdot k \log\log k, \ j= 1, \\
            \frac{N}{2^{\log^{1-2\e}N}} \cdot i_N^{1-2\e}\log\log \ell, \ j = 2, \\
            \frac{N}{2^{\log^{1-2\e}N}} \cdot (i_N-\ell)\log\log i_N, \ j = 3
        \end{cases} \\
        &= o(N)
    \end{align*}
    \textit{ad (iii).} For $n,m \in I_2,$ 
    \begin{equation*}
        \#I_2 \cdot \sup_{t \in [0,1)}\#\left\{m \in I_2 \ | \ \|\tn_m-t\| \leq \frac{s}{N}\right\} \ll \frac{N \log \log N \log\log\log N}{\log ^\e N} = o(N).
    \end{equation*}
    \textit{ad (iv).} Finally,
    \begin{equation*}
        \sup_{t \in [0,1)}\#\left\{n \in I_3 \ | \ \|\tn_n-t\| \leq \frac{s}{N}\right\} \ll s + \log i_N \log\log i_N,
    \end{equation*}
    so
    \begin{equation*}
        \# I_2 \cdot \sup_{t \in [0,1)} \left\{n \in I_3 \ | \ \|\tn_n - t \| \leq \frac{s}{N}\right\} \ll N \left(\frac{s\log i_N + \log^2i_N \log\log i_N}{i_N^{1-\e}} \right) = o(N).
    \end{equation*}
    Note that, since the same replacement bounds apply for $(\ts_n),$ these estimates also hold when replacing $\tn_n$ with $\ts_n.$ \\
    This yields the claim.
\end{proof}

\subsection{Pair correlation of the continuous sequence.} This argument follows \cite{lachman2021additiveenergydiscrepancypoissonian} loosely. Define
\begin{equation*}
    R_{s,N} \coloneq R_{s,N}((\ts_n)_{n \leq N}) = \frac{1}{N} \sum_{1 \leq n \neq m \leq N} \1\left(\|\ts_n-\ts_m\| \leq \frac{s}{N}\right).
\end{equation*}
This is a random variable with $\ts_n = \|x_n + y_n \|$ and $y_n \sim \text{Unif}([-g(n),g(n)]).$
\begin{proposition}\label{ppccont}
    The sequence $(\ts_n)$ has Poissonian pair correlation almost surely.
\end{proposition}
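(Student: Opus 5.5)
We prove that $\E R_{s,N}\to 2s$ and that $\Var R_{s,N}$ is small enough to be summed along a sparse subsequence of $N$; we then interpolate between consecutive terms of that subsequence.

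\emph{Step 1: expectation.} For a fixed pair $n\neq m$, the difference $\ts_n-\ts_m = x_n-x_m+y_n-y_m \bmod 1$ has a density equal to the convolution of the uniform densities on $[-g(n),g(n)]$ and $[-g(m),g(m)]$, shifted by $x_n-x_m$. This density is bounded by $\frac{1}{2\max(g(n),g(m))}$. If both $g(n)$ and $g(m)$ are much larger than $s/N$, then
\[
\P\bigl(\|\ts_n-\ts_m\|\le s/N\bigr) = \frac{2s}{N}\, h_{n,m}(x_n-x_m) + \text{error},
\]
where $h_{n,m}$ is the triangular/trapezoidal density. For the bulk of pairs, namely $n,m\in I_3$ with $i_n,i_m\ge i_N-\log\log i_N$ say, we sum over $m$ for fixed $n$. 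Since $(x_m)$ is a van der Corput block with spacing about $2^{-i_m}$, and $g(m)2^{i_m}=\log\log i_m\to\infty$, a Riemann-sum argument on $h_{n,m}$ gives
\[
\sum_m h_{n,m}(x_n-x_m)\cdot\frac1N = 1+o(1).
\]
Pairs involving smaller indices contribute $o(1)$ to $\E R_{s,N}$. This follows from the same counting as in \eqref{tri}, applied in expectation with the density bound $\ll 1/g$. The key input is that $Ng(n)\to\infty$ uniformly outside a set of indices of size $o(N)$, together with the low-discrepancy property of $(x_n)$, which gives exact equidistribution at scale $g$.

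\emph{Step 2: variance.} We write $\Var R_{s,N}=N^{-2}\sum\mathrm{Cov}(\1_{nm},\1_{n'm'})$. Indicators with disjoint index pairs are independent, so only pairs sharing an index contribute. Terms with $\{n,m\}=\{n',m'\}$ give $\ll N^{-2}\cdot N\cdot \E R_{s,N}\ll 1/N$. Terms sharing exactly one index, say $n=n'$, are bounded by
\[
\P\bigl(\|\ts_n-\ts_m\|\le s/N,\ \|\ts_n-\ts_{m'}\|\le s/N\bigr).
\]
Conditioning on $y_n$ and using the density bound $\ll (Ng)^{-1}$ for each of the other two events, we sum over $m,m'$ using the local counts \eqref{tri}. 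This gives a total $\ll N\cdot(1+\log\log i_N)^2$ up to lower-order terms, hence
\[
\Var R_{s,N}\ll \frac{(\log\log\log N)^2}{N}\cdot\mathrm{polylog}\ll N^{-1/2}.
\]
The main care is needed for the small indices $n\in I_1$, where $g(n)$ is large relative to $1/N$ but the counts are handled exactly as in Proposition \ref{samepc}.

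\emph{Step 3: almost sure convergence.} By Chebyshev's inequality and Borel--Cantelli along $N_k=\lfloor k^{3}\rfloor$ (any polynomial growth for which $\sum_k N_k^{-1/2}<\infty$), we get $R_{s,N_k}\to 2s$ almost surely, simultaneously for all rational $s$. For $N_k\le N<N_{k+1}$ we have $N_{k+1}/N_k\to1$. Monotonicity of the count in both $N$ and $s$ then gives
\[
\frac{N_k}{N}R_{s N_k/N,\,N_k}\le R_{s,N}\le \frac{N_{k+1}}{N}R_{sN_{k+1}/N,\,N_{k+1}},
\]
so $R_{s,N}\to 2s$ for every $s>0$ by sandwiching with rational $s'$ close to $s$.

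The main obstacle is Step 1: showing the expectation is exactly $2s+o(1)$ rather than merely $\asymp s$. This requires the Riemann-sum equidistribution of the shifts $x_n-x_m$ at scale $g(m)$ across different dyadic blocks, and I would exploit the exact dyadic spacing of van der Corput blocks to do so.
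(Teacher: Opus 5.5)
Steps 2 and 3 are fine and follow the paper's route. In the variance only index-sharing terms survive; then Chebyshev plus Borel--Cantelli along a polynomial subsequence; then the monotone sandwich in $N$ and $s$, which you state more explicitly than the paper does.

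\textbf{The gap is in Step 1}, exactly where the paper locates the main difficulty. You justify $\frac1N\sum_m h_{n,m}(x_n-x_m)=1+o(1)$ by ``the low-discrepancy property of $(x_n)$, which gives exact equidistribution at scale $g$''. It does not. We have $Ng(N)\asymp\log\log\log N$, whereas $N D_N^*((x_n)_{n\le N})$ is only $O(\log N)$, and this order is attained along suitable $N$. So an interval of length $\asymp g(N)$ is known to contain its $\asymp\log\log\log N$ points only up to an error $O(\log N)$. Multiplying by the height $\asymp 1/g(N)$ of $h_{n,m}$, the error bound in your Riemann sum is of order $N\log N/\log\log\log N$, far larger than the main term $N$.

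The ``exact dyadic spacing'' you invoke at the end covers only the complete blocks $B_i$, $i<i_N$. The incomplete top block $\{m:\ 2^{i_N}\le m\le N\}$ is not an equispaced grid and may contain a positive proportion of all indices. It cannot be sidestepped, because your Borel--Cantelli step needs $\E R_{s,N}\to 2s$ along $N=\lfloor k^3\rfloor$, whose binary digits are arbitrary.

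The paper handles this as follows.
\begin{itemize}
\item It proves $F_N(x)=\sum_{n\le N}f_n(x)=N(1+o(1))$, uniformly in $x$, first along a special sequence $N_k$. Its binary digits vanish below position $\ell-L(\ell)$, with $L(\ell)=\lceil\log\log\log\ell\rceil$, so the van der Corput discrepancy there is only $\ll L(\ell)/N_k$.
\item It passes to all $N$ using that $F_N$ is monotone in $N$ and $N_{k+1}/N_k\to1$.
\item The mean is then read off from $F_N$ via $\E R_{s,N}=\frac1N\iint \1(\|x-y\|\le s/N)\bigl(F_N(x)F_N(y)-\sum_n f_n(x)f_n(y)\bigr)\,dx\,dy$.
\end{itemize}

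Within your own framework, an alternative repair is to split the top block along the binary digits of $N-2^{i_N}$ into translated complete grids of spacing $2^{-a}$. At scale $\lambda$, each grid with $2^{-a}\le\lambda$ contributes an error $O(1)$, and there are at most $\log_2(2^{i_N}\lambda)+1$ of them. All coarser grids together lie on a single translated grid of spacing $\ge\lambda/2$, so they also contribute $O(1)$. This gives local discrepancy $O(\log_2(2^{i_N}\lambda))$ instead of $O(\log N)$. Take $\lambda\asymp g(n)+g(N)$ and use that $h_{n,m}$ has height at most $1/(2\max(g(n),g(N)))$; a layer-cake decomposition then gives an $o(N)$ error, uniformly in bulk $n$. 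Some such argument is needed; the claim does not follow from low discrepancy.
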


\begin{proof}
We want to show
\begin{equation*}
    R_{s,N} \longrightarrow 2s
\end{equation*}
almost surely. 
We proceed in four steps: \begin{enumerate}
    \item $\E\left[R_{s,N}\right] \rightarrow 2s.$
    \item Establish and upper bound for $\Var(R_{s,N})$.
    \item Apply Chebychev's inequality.
    \item Apply Borel-Cantelli lemma.
\end{enumerate}

Define for each $n$
\begin{equation*}
    f_n(x) = \frac{1}{2g(n)}\1(\|x_n-x\| \leq g(n)).
\end{equation*}
This is the density of $\ts_n$: For any $A \in \mathcal B([0,1))$
\begin{align*}
    \P(\ts_n \in A) = \int_0^1 \1(\ts_n \in A) d\lambda = \frac{1}{2g(n)}\int_{-g(n)}^{g(n)} \1(\|x_n+y_n\| \in A) dy_n.
\end{align*}
Changing variables $\|x_n+y_n\| \mapsto x$ with support condition $\1(\|x_n-x\| \leq g(n))$ gives
\begin{align*}
    \frac{1}{2g(n)}&\int_{-g(n)}^{g(n)} \1(\|x_n+y_n\| \in A) dy_n \\
    &= \int_0^1 \1(x \in A) \frac{1}{2g(n)}\1(\|x_n-x\| \leq g(n)) dx = \int_0^1 \1(x \in A) f_n(x)dx.
\end{align*}

\medskip

\textbf{Step 1.} We want to show
\begin{equation}\label{step1e}
    \E[R_{s,N}] = \frac{1}{N}\sum_{1 \leq n \neq m \leq N} \E\left[\1\left(\|\ts_n-\ts_m\| \leq \frac{s}{N}\right)\right] \longrightarrow 2s, \quad N \rightarrow \infty.
\end{equation}
We have
\begin{equation*}
    \E\left[\1\left(\|\ts_n-\ts_m\| \leq \frac{s}{N}\right)\right] = \int_0^1\int_0^1 \1\left(\|x-y\|\right)f_n(x)f_n(y)dxdy,
\end{equation*}
so
\begin{equation*}
    \E[R_{s,N}] = \frac{1}{N} \int_0^1\int_0^1 \1\left(\|x-y\| \leq \frac{s}{N}\right)\sum_{1 \leq n \neq m \leq N}f_n(x)f_m(y)dxdy.
\end{equation*}
We introduce the diagonal $x = y$ via
\begin{equation*}
    \sum_{1 \leq n \neq m \leq N}f_n(x)f_m(y) = \sum_{1 \leq n \leq N}\sum_{1 \leq m \leq N}f_n(x)f_n(y) - \sum_{n \leq N}f_n(x)f_n(y).
\end{equation*}
Put $F_N(x) = \sum_{1 \leq n \leq N}f_n(x).$ In order to show (\ref{step1e}), we write
\begin{align*}
    \E[R_{s,N}] = \frac{1}{N} \int_0^1\int_0^1 &\1\left(\|x-y\| \leq \frac{s}{N}\right)F_N(x)F_N(y)dxdy \\
    &- \frac{1}{N} \sum_{1 \leq n \leq N} \int_0^1\int_0^1 \1\left(\|x-y\| \leq \frac{s}{N}\right)f_n(x)f_n(y)dxdy
\end{align*}
and proceed as follows: We show that the diagonal is negligible, that is,
\begin{equation}\label{diag}
    \frac{1}{N}\sum_{1 \leq n \leq N}\int_0^1\int_0^1 \1\left(\|x-y\| \leq \frac{s}{N}\right)f_n(x)f_n(y)dxdy = o_N(1),
\end{equation}
and that we get the main term from
\begin{equation}\label{expmt}
    \frac{1}{N} \int_0^1\int_0^1 \1\left(\|x-y\| \leq \frac{s}{N}\right)F_N(x)F_N(y)dxdy = 2s (1+o_N(1)).
\end{equation}
The latter follows from: \\
\textit{Claim.}
\begin{equation}\label{claimex}
    F_N(\cdot) = N(1+o(1)).
\end{equation}
     
We defer the proof of the claim for now. Together, (\ref{diag}) and (\ref{expmt}) yield
\begin{align*}
    \E[R_{s,N}] =& \frac{1}{N}\int_0^1 \int_0^1 \1\left(\|x-y\| \leq \frac{s}{N}\right)F_N(x)F_N(y)dxdy \\
    &- \frac{1}{N} \sum_{1 \leq n \leq N} \int_0^1\int_0^1 \1\left(\|x-y\| \leq \frac{s}{N}\right)f_n(x)f_n(y)dxdy \\
    =& N(1+o_N(1)) \int_0^1\int_0^1 \1\left(\|x-y\| \leq \frac{s}{N}\right)dxdy - o_N(1) \\
    =& N\frac{2s}{N} (1+o_N(1)) = 2s (1+o_N(1)),
\end{align*}
which gives the claim. \\
We turn to proving (\ref{diag}). Notice that $f_n(x) \leq \frac{1}{2g(n)},$ so
\begin{equation*}
    \int_0^1 \1\left(\|x-y\| \leq \frac{s}{N}\right)f_n(x)dx \leq \frac{1}{2g(n)} \int_0^1 \1\left(\|x-y\| \leq \frac{s}{N}\right)dx.
\end{equation*}
Plugging this in, we obtain
\begin{align*}
    \frac{1}{N}\sum_{1 \leq n \leq N}\int_0^1\int_0^1 \1\left(\|x-y\|\leq \frac{s}{N}\right)f_n(x)f_n(y)dxdy\\
    \leq \frac{1}{N} \sum_{1 \leq n \leq N} \frac{s}{Ng(n)}\int_0^1f_n(y)dy = o_N(1).
\end{align*}

\medskip

\textit{Proof of Claim.} It remains to show (\ref{claimex}).
In proving this, a discrepancy bound arises naturally, and we want to utilize our good understanding of the discrepancy of the van der Corput sequence $x_n$ for this. Doing this immediately over the full-length interval, however, introduces a non-negligible factor $g(N)^{-1},$ which is too large, as $g(n)$ decreases with increasing $n.$ So we want to use a subsequence $(N_k)_{k \geq 1}$ to division $[1,N]$ into smaller intervals $[N_k,N_{k+1})$ such that $D_{N_k}((x_n)_{n \leq N_k})$ is particularly small, similar to powers of $2,$ but we also require $\frac{N_{k+1}}{N_k} \rightarrow 1,$ so that we can use a sandwiching argument.  \\
We construct such a sequence to emulate powers of $2,$ whilst retaining the growth condition, which leads us to consider $N_k$ with sparse and large dyadic digits as follows. \\
Write 
\begin{equation*}
    N = \sum_{i \geq 0}\e_i(N)2^i, \quad \e_i(N) \in \{0,1\},
\end{equation*}
and denote by $I_N = \{i \ | \e_i(N) = 1\}$ and $m(N) = \min I_N$ the minimal non-zero digit index. Put 
\begin{equation}
    L(\ell) = \lceil \log\log\log \ell \rceil
\end{equation}
and define, for each $\ell \geq 1,$
\begin{equation*}
    S_\ell = \left\{2^\ell \leq N < 2^{\ell+1} \ | \ \e_i(N) = 0 \text{ for all } i \leq \ell - L(\ell)\right\}. 
\end{equation*}
Finally, let $(N_k)_{k \geq 1}$ be the sequence of elements in $\bigcup_{\ell \geq 1}S_\ell.$ One can easily see that this satisfies, for $N_k \in S_r$:
\begin{enumerate}[label =(\roman*)]
    \item Only the last $L(r)$ digits can be non-zero, i.e.
        $$|I_{N_k}| \leq L(r).$$
    \item The minimal non-zero digit index is ``large'', i.e.
        $$m(N_k) \geq r-L(r) \rightarrow \infty.$$
    \item $\frac{N_{k+1}}{N_k} \rightarrow 1.$
\end{enumerate}

Assume for now that $N = N_{K}$ for some $K.$ We decompose then
\begin{equation*}
    F_{N_K}(x) = \sum_{1 \leq n \leq N_K} f_n(x) = \sum_{1 \leq k \leq K-1} \sum_{N_k < n \leq N_{k+1}} f_n(x).
\end{equation*}
Due to condition (iii), $g(\cdot)$ is asymptotically constant on such a block, so, plugging in the definition of $f_n(x),$
\begin{align*}
    \sum_{N_k \leq n < N_{k+1}}& \frac{1}{2g(n)} \1(x_n \in [x-g(n),x+g(n)]) \\
    &= \frac{1}{2g(N_k)}(1+o_N(1)) \sum_{N_k \leq n < N_{k+1}} \1(x_n \in [x-g(n),x+g(n)]).
\end{align*}
By monotonicity of $g(\cdot),$ we can bound this from above and below by
\begin{equation*}
    \left(\frac{1}{2g(N_k)} \sum_{N_k \leq n < N_{k+1}} \1(x_n \in [x-c,x+c])\right)(1+o_N(1))
\end{equation*}
for $c \in \{g(N_k),g(N_{k+1})\}.$ Write
\begin{align*}
   \frac{1}{2g(N_k)}& \sum_{N_k < n \leq N_{k+1}} \1(x_n \in [x-c,x+c]) =\\
    &\frac{1}{2g(N_k)} \left(\sum_{1 \leq n \leq N_{k+1}} \1(x_n \in [x-c,x+c]) - \sum_{1 \leq n \leq N_k} \1(x_n \in [x-c,x+c])\right).
\end{align*} 
By definition, 
\begin{equation*}
    \sum_{1 \leq n \leq N_j}\1\left(x_n \in [x-c,x+c]\right) = 2cN_j+ O(N_jD_{N_j})
\end{equation*}
with $j \in \{k,k+1\}.$ If $N_j \in S_{\ell_j},$
\begin{equation*}
    D_{N_j}((x_n)_{n \leq N_j}) \ll \frac{L(\ell_j)}{N_j}
\end{equation*}
due to the sparsity of digits $N_j.$ Therefore,
\begin{equation*}
    \frac{1}{2g(N_k)} \sum_{N_k \leq n < N_{k+1}} \1(x_n \in [x-c,x+c]) = (N_{k+1}-N_k)(1+o(1)),
\end{equation*}
which gives
\begin{equation*}
    \sum_{1 \leq k \leq K-1}\sum_{N_k \leq n < N_{k+1}}f_n(x) = \left(\sum_{1 \leq k \leq K-1} N_{k+1}-N_k\right)(1+o(1)) = N_K + o(1).
\end{equation*}
For a general $N$ with $N_k \leq N < N_{k+1},$ since $f_n(x) \geq 0,$ we have
\begin{equation*}
    F_{N_k} (x) \leq F_N(x) \leq F_{N_{k+1}}(x),
\end{equation*}
and with $\frac{N_{k+1}}{N_k} \rightarrow 1,$ this yields
\begin{equation*}
    F_N(x) = N(1+o_N(1))
\end{equation*}
for all $N,$ which proves (\ref{claimex}), and thus concludes Step 1.

\textbf{Step 2.} We now turn to bounding
$$\Var(R_{s,N}) = \E[R_{s,N}^2]-\E[R_{s,N}]^2.$$
Compute
\begin{align*}
   \E[R_{s,N}^2] = \frac{1}{N^2}& \sum_{\substack{1 \leq n_1,n_2,m_1,m_2 \leq N \\ n_1 \neq m_1, \ n_2 \neq m_2}}\E\left[\1\left(\|\ts_{n_1}-\ts_{m_1}\| \leq \frac{s}{N}\right)\1\left(\|\ts_{n_2}-\ts_{m_2}\| \leq \frac{s}{N}\right)\right] \\
    = \frac{1}{N^2}& \left( \sum_{\substack{1 \leq n_1,n_2,m_1,m_2 \leq N \\ n_1,n_2,m_1,m_2 \text{ distinct}}} + \sum_{\substack{1 \leq n_1,n_2,m_1,m_2 \leq N \\ m_1 \neq n_1, \ m_2 \neq n_2 \\ \{n_1,m_1\} \cap \{n_2,m_2\} \neq \emptyset}}\right) \\
    &\E\left[\1\left(\|\ts_{n_1}-\ts_{m_1}\| \leq \frac{s}{N}\right)\1\left(\|\ts_{n_2}-\ts_{m_2}\| \leq \frac{s}{N}\right)\right].
\end{align*}
In the first sum $n_1,n_2,m_1,m_2$ are all distinct, thus the terms $y_{n_1}-y_{m_1}$ and $y_{n_2}-y_{m_2}$ are independent, so
\begin{align*}
    \frac{1}{N^2} \sum_{\substack{1 \leq n_1,n_2,m_1,m_2 \leq N \\ m_1,n_1, m_2,n_2 \ \text{distinct}}}\E\left[\1\left(\|\ts_{n_1}-\ts_{m_1}\| \leq \frac{s}{N}\right)\1\left(\|\ts_{n_2}-\ts_{m_2}\| \leq \frac{s}{N}\right)\right] \\
    = \frac{1}{N^2}\sum_{\substack{1 \leq n_1,n_2,m_1,m_2 \leq N \\ m_1,n_1, m_2,n_2 \ \text{distinct}}}\E\left[\1\left(\|\ts_{n_1}-\ts_{m_1}\| \leq \frac{s}{N}\right)\right] \E\left[\1\left(\|\ts_{n_2}-\ts_{m_2}\| \leq \frac{s}{N}\right)\right] \leq & \E[R_{s,N}]^2.
\end{align*}
Plugging this in yields
\begin{align*}
    \Var(R_{s,N})& = \E[R_{s,N}^2]-\E[R_{s,N}]^2 \\
    &= \frac{1}{N^2} \left( \sum_{\substack{1 \leq n_1,n_2,m_1,m_2 \leq N \\ n_1,n_2,m_1,m_2 \text{ distinct}}} + \sum_{\substack{1 \leq n_1,n_2,m_1,m_2 \leq N \\ m_1 \neq n_1, \ m_2 \neq n_2 \\ \{n_1,m_1\} \cap \{n_2,m_2\} \neq \emptyset}}\right) \\
    & \qquad\E\left[\1\left(\|\ts_{n_1}-\ts_{m_1}\| \leq \frac{s}{N}\right)\1\left(\|\ts_{n_2}-\ts_{m_2}\| \leq \frac{s}{N}\right)\right] -\E[R_{s,N}]^2 \\
    &\leq  \sideset{}{'}\sum_{1 \leq n_1,n_2,m_1,m_2 \leq N} \E\left[\1\left(\|\ts_{n_1}-\ts_{m_1}\|\leq \frac{s}{N}\right)\1\left(\|\ts_{n_2}-\ts_{m_2}\|\leq \frac{s}{N}\right)\right]\\
    &\qquad+ \E[R_{s,N}]^2- \E[R_{s,N}]^2
\end{align*}

with
$$\sideset{}{'}\sum_{1 \leq n_1,n_2,m_1,m_2 \leq N} = \sum_{\substack{1 \leq n_1,n_2,m_1,m_2 \leq N \\ m_1 \neq n_1, \ m_2 \neq n_2 \\ \{n_1,m_1\} \cap \{n_2,m_2\} \neq \emptyset}},$$
so
\begin{equation*}
    \Var(R_{s,N}) \leq \frac{1}{N^2}\sideset{}{'}\sum_{1 \leq n_1,n_2,m_1,m_2 \leq N} \E\left[\1\left(\|\ts_{n_1}-\ts_{m_1}\| \leq \frac{s}{N}\right)\1\left(\|\ts_{n_2}-\ts_{m_2}\| \leq \frac{s}{N}\right)\right].
\end{equation*}
We split this into 
$$\frac{1}{N^2}\left(\sum_{\substack{1 \leq n_1,n_2,m_1,m_2 \leq N \\ m_1 \neq n_1, \ m_2 \neq n_2 \\ |\{n_1,m_1\} \cap \{n_2,m_2\}| = 2}}+\sum_{\substack{1 \leq n_1,n_2,m_1,m_2 \leq N \\ m_1 \neq n_1, \ m_2 \neq n_2 \\ |\{n_1,m_1\} \cap \{n_2,m_2\}| = 1}}\right)\E\left[\1\left(\|\ts_{n_1}-\ts_{m_1}\| \leq \frac{s}{N}\right)\1\left(\|\ts_{n_2}-\ts_{m_2}\| \leq \frac{s}{N}\right)\right].$$

The first sum simply contributes
$$\frac{2}{N^2}\sum_{1 \leq n \neq m \leq N} \E\left[\1\left(\|\ts_n-\ts_m\| \leq \frac{s}{N}\right)\right] = \frac{2\E[R_{s,N}]}{N}.$$

The second sum contributes
$$\frac{4}{N^2}\sum_{\substack{1 \leq n_1,n_2,m \leq N \\ m \neq n_1 \neq n_2 \neq m}}\E\left[\1\left(\|\ts_{n_1}-\ts_m\| \leq \frac{s}{N}\right)\1\left(\|\ts_{n_2}-\ts_m\| \leq \frac{s}{N}\right)\right]$$
where
\begin{align*}
    \E&\left[\1\left(\|\ts_{n_1}-\ts_m\| \leq \frac{s}{N}\right)\1\left(\|\ts_{n_2}-\ts_m\| \leq \frac{s}{N}\right)\right] \\
    &= \int_0^1\int_0^1\int_0^1 \1\left(\|x-z\| \leq \frac{s}{N}\right)\1\left(\|y-z\| \leq \frac{s}{N}\right)f_{n_1}(x)f_{n_2}(y)f_m(z)dxdydz.
\end{align*}
Recalling
\begin{equation*}
    F_N(x) = \sum_{1 \leq n \leq N}f_n(x), \quad f_n(x) = \frac{1}{2g(n)}\1(\|x_n-x\| \leq g(n)),
\end{equation*}
with $f_n(\cdot)$ being the density of $\ts_n,$
we write
\begin{align*}
    \sum_{1 \leq m \leq N}& \E[\phi(\ts_m)] \\
    &= N \int_0^1 \left(\int_0^1 \int_0^1 \1\left(\|x-z\| \leq \frac{s}{N}\right)\1\left(\|y-z\| \leq \frac{s}{N}\|\right) f_{n_1}(x)f_{n_2}(y)dxdy \right) F_N(z)dz
\end{align*}
with
\begin{align*}
    \phi(z) = \int_0^1\int_0^1 \1\left(\|x-z\| \leq \frac{s}{N}\right)\1\left(\|y-z\| \leq \frac{s}{N}\right) f_{n_1}(x)f_{n_2}(y)dxdy.
\end{align*}
Summing over the remaining indices (and recalling the summed densities function $F_N(\cdot)$), i.e.
\begin{align*}
    \sum_{1 \leq n_1 \leq N} \int_0^1 \int_0^1 \1\left(\|\ts_{n_1}-x\| \leq \frac{s}{N}\right)\1\left(\|\ts_{n_2}-x\| \leq \frac{s}{N}\|\right) d\ts_{n_1}d\ts_{n_2} \\
    = N \int_0^1\left(\int_0^1 \1\left(\|y-x\| \leq \frac{s}{N}\right)\1\left(\|\ts_{n_2}-x\| \leq \frac{s}{N}\right) d\ts_{n_2}\right)F_N(y) dy
\end{align*}
and
\begin{align*}
    \sum_{1 \leq n_2 \leq N} \int_0^1\1\left(\|y-x\| \leq \frac{s}{N}\right)\1\left(\|\ts_{n_2}-x\| \leq \frac{s}{N}\right)d\nu_{n_2} \\
    = N \int_0^1 \1\left(\|y-x\| \leq \frac{s}{N}\right)\1\left(\|z-x\| \leq \frac{s}{N}\right) F_N(z) dz,
\end{align*}
yields
\begin{align*}
    &\sum_{\substack{1 \leq n_1,n_2,m \leq N \\ m \neq n_1 \neq n_2 \neq m}}\E\left[\1\left(\|\ts_{n_1}-\ts_m\| \leq \frac{s}{N}\right)\1\left(\|\ts_{n_2}-\ts_m\| \leq \frac{s}{N}\right)\right] \\
    & \quad \leq N^3\int_0^1\int_0^1\int_0^1 \1\left(\|y-x\| \leq \frac{s}{N}\right)\1\left(\|z-x\| \leq \frac{s}{N}\right)F_N(x)F_N(y)F_N(z) dxdydz \\
    & \quad = N^3 \int_0^1\left(\int_0^1 \1\left(\|y-x\| \leq \frac{s}{N}\right) F_N(y) dy\right)^2 F_N(x) dx.
\end{align*}
Hence
\begin{align*}
    \frac{4}{N^2}\sum_{\substack{1 \leq n_1,n_2,m \leq N \\ m \neq n_1 \neq n_2 \neq m}}\E[\1_{s,N}(\ts_{n_1}-\ts_m)\1_{s,N}(\ts_{n_2}-\ts_m)]  \ll 4N\left(\frac{2s}{N}\right)^2 = \frac{16s^2}{N}.
\end{align*}

\textbf{Step 3 \& 4.} To conclude, we first use Chebyshev's inequality, yielding
\begin{equation*}
    \P\left(\left|R_{s,N} - \E[R_{s,N}]\right| \geq \frac{1}{N^{1/4}}\right) \ll \frac{1}{N^{1/2}}.
\end{equation*}
The claim follows by Borel-Cantelli. Since this is not summable, we first consider the pair correlation statistic along a subsequence $(R_{s,N_m})$ with $\frac{N_{m+1}}{N_m} \rightarrow 1.$ For $N_M = M^4,$ we get
\begin{equation*}
    R_{s,N_M} \longrightarrow 2s, \quad N_M \rightarrow \infty
\end{equation*}
almost surely. As in Lemma 3.1 in \cite{Rudnick_Technau_2020}, this suffices to show
\begin{equation*}
    R_{s,N} \longrightarrow 2s, \quad N \rightarrow \infty
\end{equation*}
in general. 
\end{proof}

Thus $(\ts_n)_{n \geq 1}$ has Poissonian pair correlation almost surely, and, by Propositions \ref{resto} and \ref{samepc}, so does $(\nu_n)_{n \geq 1}.$

\section*{Acknowledgements}

The author was supported by the Austrian Science Fund (FWF), project 10.55776/PAT5120424. The author is grateful to Christoph Aistleitner and Manuel Hauke-Treuer for helpful comments on an earlier version of this paper.

\printbibliography

\end{document}